\documentclass[12pt, leqno]{amsart}
\usepackage{amsmath}
\usepackage{amssymb}
\usepackage{amsthm}
\usepackage{enumerate}
\usepackage[mathscr]{eucal}
\usepackage{xcolor}
\theoremstyle{plain}
\usepackage{tikz}
\usepackage{soul}
\usepackage[normalem]{ulem}
\usepackage[normalem]{ulem}
\newtheorem{theorem}{Theorem}[section]
\newtheorem{lemma}[theorem]{Lemma}
\newtheorem{prop}[theorem]{Proposition}
\newtheorem{cor}[theorem]{Corollary}

\theoremstyle{definition}
\newtheorem{definition}[theorem]{Definition}
\newtheorem{remark}[theorem]{Remark}

\newtheorem{example}[theorem]{Example}

\theoremstyle{remark}

\newcommand{\Ext}{{\rm Ext}\,}

\newcommand{\sm}{{\rm Sm}\,}

\numberwithin{equation}{section}
\allowdisplaybreaks

\begin{document}
	
\title[Simultaneous Zero Inclusion property]{On the simultaneous zero inclusion property and characterization of Inner product space}
	\author[S. Ghosh]{Souvik Ghosh}
    
	\address[Ghosh]{Department of Mathematical Sciences, Indian Institute of Science Education and		Research (IISER) Mohali, Knowledge City, Sector 81, SAS Nagar, Punjab-140306, India}	  \email{sghosh0019@gmail.com, souvikghosh@iisermohali.ac.in}
	
	
	
	\begin{abstract}
		We show that if a Banach space $X$ is smooth and almost locally uniformly rotund and if satisfies the following $$0\in W(T) \implies 0\in W(T^{-1})$$ for all invertible bounded linear map on $X$ then Birkhoff-James orthogonality in $X$ is symmetric.
 This result, therefore,  characterizes the inner product spaces out of all smooth and almost locally uniformly rotund spaces having three or more dimension via simultaneous zero inclusion property, introduced by Bra\v{c}i\v{c} et al. [J. Math. Anal. Appl., \textbf{449} (2017) 1413–1423]. Further, this gives answers to some open questions asked  by E. Poon [J. Math. Anal. Appl., \textbf{534 }(2024) 128052].
	\end{abstract}
	
	\keywords{Birkhoff-James orthogonality; Inner product space; Symmetry; Spatial numerical range}
	\subjclass[2020]{Primary 46B20; Secondary 47A12, 47L05.}
	\maketitle
	
	\section{Introduction.} 
The numerical range is one of the fundamental notions in operator theory, providing a useful connection between the algebraic and geometric properties of bounded linear operators. In the setting of Hilbert spaces, the classical numerical range enjoys several remarkable features, among which the celebrated Toeplitz-Hausdorff theorem asserts that it is always a convex subset of the complex plane. This convexity has profound consequences and has motivated extensive research in operator theory and related areas. In contrast, when one considers bounded linear operators on a general real or complex Banach space, the situation becomes considerably more intricate. \\
Let $X$ be a complex Banach space with unless otherwise stated and let $X^*$ denote its dual space. $B_X$ and $S_X$ denote as the unit ball and unit sphere of $X$, respectively. For a convex bounded subset $A$ of $X,$ we denote $\Ext A$ as the collection of all extreme points of $A.$ Given $x\in S_X,$ $J(x)=\{x^*\in S_{X^*}: x^*(x)=1\}$ is the collection of all the supporting functionals of $x.$ When $J(x)$ is singleton $x$ is \emph{smooth} or the norm is G\^ateaux differentiable at $x.$ We denote $\sm(X)$ as the collection of all smooth points in $X$. $X$ is smooth if every $x\in S_X$ is smooth, i.e., $\sm(X)=X\setminus\{0\}$. An element $x\in S_X$ is said to be \emph{exposed} if there exists $x^*\in J(x)$ such that for all $y\in S_X,$ $x^*(y)=1\implies y=x.$ $x\in S_X$ is called rotund if for any $y\in S_X,$ $\frac{\|x+y\|}{2}=1 \implies x=y.$ In a
strictly convex space every element of the unit sphere is rotund. An element $x\in S_X$ is said to be \emph{almost locally uniformly rotund (ALUR)} if given any $\{x_m^*\}\subset S_{X^*}, \{x_n\}\subset S_X,$ 
$$\lim_{m\to \infty}\lim_{n\to \infty} x_m^*\frac{(x_n+x)}{2}=1$$ then it implies that $\lim\limits_{n\to\infty} x_n=x.$ The space $X$ is said to be ALUR if each element of $S_X$ is ALUR.   $L(X)$ denotes the space of all bounded linear operators on $X.$ Given $T\in L(X)$, $M_T$ is the set of all norm attainment set of $T,$ i.e., $M_T:=\{x\in S_X: \|Tx\|=\|T\|\}.$ The collection of all invertible operators in $L(X)$ is denoted as $GL(X).$ Define $$
\Pi=\{(x,x^*)\in X\times X^*: x\in S_X, x^*\in J(x)\}.$$
Then, for $T\in L(X)$, the (spatial) numerical range of $T$ is defined by $$ W(T)=\{\,x^*(Tx):(x,x^*)\in\Pi\,\}.$$ The notion of the spatial numerical range naturally extends the classical concept, but many of its appealing geometric properties, including convexity, need not persist unless the norm is induced by an inner product.  When $X$ is a Hilbert space, this definition coincides with the classical numerical range (or field of values). These attracts many mathematicians to  understand the geometric and analytical properties of numerical ranges in Banach spaces and their relationship with the geometry of the underlying space.

The concept of simultaneous zero inclusion property was first introduced by Bra\v ci\v c and Diogo in \cite{Bracic} to understand the spatial numerical range.  
    \begin{definition}
        Let $T\in L(X)$ be invertible. Then $X$ has \textit{simultaneous zero inclusion property (in short, $S0I$)} at $T$ if $0\in W(T) \implies 0\in W(T^{-1})$. We say $X$ has \textit{$S0I$} if for each invertible $T\in L(X)$, $0\in W(T)\implies 0\in W(T^{-1}).$
    \end{definition}
    Bra\v ci\v c et al. studied the $S0I$ property in finite-dimensional complex Banach spaces. They showed that neither $\ell_1^n$ nor $\ell_\infty^n$ has the $S0I$ property. Later, Poon \cite{Poon} further investigated this property. For an absolute symmetric norm on $\mathbb C^2$, he proved that the $S0I$ property holds if and only if the norm is induced by an inner product. He also established a connection between the $S0I$ property and Birkhoff-James orthogonality.
\\
The concept of Birkhoff--James orthogonality provides a natural extension of the classical notion of orthogonality from inner product spaces to arbitrary normed spaces. For $x,y\in X$, we say that $x$ is \emph{Birkhoff-James orthogonal} to $y$, denoted by $x\perp_{B} y$, if
$$
\|x+\lambda y\|\geq \|x\| \quad \text{for every } \lambda\in\mathbb{F},
$$
where $\mathbb{F}$ denotes the underlying scalar field. This notion was first introduced by Birkhoff \cite{Birkhoff35} and later studied extensively by James \cite{James45, James47a, James47b}, and it has become one of the central concepts in the geometry of Banach spaces. In this regard, we note the following chracterization given by James which will be an essential tool for the whole article.
\begin{theorem}\label{BJ_James}
    Let $X$ be a normed space and let $x, y\in X.$ Then $x\perp_B y$ if and only if there exists $x^*\in J(x)$ such that $x^*(y)=0.$
\end{theorem}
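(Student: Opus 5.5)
We prove the two implications separately. Throughout, fix $x,y\in X$ and note that both $x\perp_B y$ and the condition ``there is $x^*\in J(x)$ with $x^*(y)=0$'' are homogeneous in $x$. Indeed, $J(\alpha x)$ for $\alpha\neq 0$ should be read as the supporting functionals of $x/\|x\|$, rotated by $\overline{\alpha}/|\alpha|$ after normalization. So we may assume $\|x\|=1$. The case $x=0$ is trivial: $0\perp_B y$ always holds, and we interpret $J(0)$ as containing any norm-one functional, so that the statement is consistent. Hence we take $x\in S_X$.

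\emph{Sufficiency.} Suppose $x^*\in J(x)$ and $x^*(y)=0$. Then for every $\lambda\in\mathbb F$,
$$\|x+\lambda y\|\ \ge\ |x^*(x+\lambda y)| = |1+\lambda\cdot 0| = 1 = \|x\|,$$
using $\|x^*\|=1$. Hence $x\perp_B y$.

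\emph{Necessity.} Suppose $x\perp_B y$. If $y$ is a multiple of $x$, say $y=\mu x$, then taking $\lambda=-1/\mu$ gives $\|x+\lambda y\|=0<1$ unless $\mu=0$. So $y=0$, and any $x^*\in J(x)$ works, since $J(x)\neq\emptyset$ by Hahn--Banach.

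Otherwise $x,y$ are linearly independent, and we set $Y=\operatorname{span}\{x,y\}$. Define $f$ on $Y$ by $f(\alpha x+\beta y)=\alpha$. Its norm is controlled by orthogonality: for $\alpha\neq 0$,
$$\|\alpha x+\beta y\| = |\alpha|\,\|x+(\beta/\alpha)y\|\ \ge\ |\alpha| = |f(\alpha x+\beta y)|,$$
while for $\alpha=0$ the inequality is trivial. Thus $\|f\|\le 1$, and since $f(x)=1$ we get $\|f\|=1$.

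We then extend $f$ by the Hahn--Banach theorem, in its real or complex version according to $\mathbb F$, to some $x^*\in X^*$ with $\|x^*\|=1$. This functional satisfies $x^*(x)=1$, so $x^*\in J(x)$, and $x^*(y)=0$.

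The only delicate point is the norm estimate for $f$, which is exactly where the hypothesis that the inequality $\|x+\lambda y\|\ge\|x\|$ holds for \emph{all} scalars $\lambda$ is used. For complex spaces it is essential that $\lambda$ ranges over $\mathbb C$ rather than $\mathbb R$, so that $f$ is complex-linear and the complex Hahn--Banach theorem applies. Everything else is routine.
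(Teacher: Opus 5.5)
Your proof is correct. It is the standard Hahn--Banach argument for James's characterization. The paper does not prove this statement itself; it quotes it from James as a tool, so there is no competing route to compare against.

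Two small corrections:

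\begin{enumerate}
\item The paper defines $J(x)$ only for $x\in S_X$, so the statement is really about $x\neq 0$. Your convention $J(0)=S_{X^*}$ does not make the equivalence hold in general. If $\dim X=1$ and $y\neq 0$, then $0\perp_B y$ holds, yet no norm-one functional vanishes at $y$. It is cleaner simply to exclude $x=0$.
\item In the complex case, $f$ is complex-linear by construction. What genuinely needs $\lambda$ to range over all of $\mathbb{C}$ is the estimate $\|f\|\le 1$, because $\beta/\alpha$ can be any complex number.
\end{enumerate}
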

Unlike orthogonality in Hilbert spaces, Birkhoff-James orthogonality is, in general, not symmetric. In particular, $x\perp_{B} y$ does not necessarily imply $y\perp_{B} x$. A fundamental characterization due independently to James and Day shows that the symmetry of Birkhoff-James orthogonality completely characterizes inner product spaces among Banach spaces.

\begin{theorem}\cite{Day47, James47a}\label{symmetry}
Let $X$ be a Banach space with $\dim(X)\geq 3$. Then Birkhoff-James orthogonality is symmetric, i.e., for all $x, y\in X$, $x\perp_B \implies y\perp_B x$ if and only if the norm on $X$ is induced by an inner product. 
\end{theorem}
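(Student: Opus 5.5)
The implication ``inner product $\Rightarrow$ symmetric'' is immediate. If the norm comes from $\langle\cdot,\cdot\rangle$, then $\|x+\lambda y\|^2=\|x\|^2+2\,\mathrm{Re}\,\bar\lambda\langle x,y\rangle+|\lambda|^2\|y\|^2$. From this, $x\perp_B y$ holds exactly when $\langle x,y\rangle=0$: if $\langle x,y\rangle\neq 0$, a small $\lambda$ chosen opposite to $\langle x,y\rangle$ makes the norm drop. The relation $\langle x,y\rangle=0$ is symmetric. So the content lies in the converse, and I would reduce it to a statement about three-dimensional subspaces. By the Jordan--von Neumann theorem, a norm is induced by an inner product iff it satisfies the parallelogram law. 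That law involves only two vectors, so it suffices to prove that every subspace $E\subseteq X$ with $\dim E=3$ is an inner product space. Symmetry of $\perp_B$ is inherited by $E$, so from now on I assume $\dim X=3$.

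The key step is to turn symmetry into the existence of norm-one projections onto every two-dimensional subspace. Let $P\subset X$ be a plane. Choose a nonzero functional $f$ with $\ker f=P$; by compactness of $S_X$, $f$ attains its norm at some $x\in S_X$, so $f/\|f\|\in J(x)$. By Theorem \ref{BJ_James}, $x\perp_B y$ for every $y\in P$, and symmetry gives $y\perp_B x$, that is, $\|y+\lambda x\|\ge\|y\|$ for all $y\in P$ and all scalars $\lambda$. Since $X=P\oplus\mathrm{span}\{x\}$, the projection $Q(y+\lambda x)=y$ onto $P$ along $x$ therefore satisfies $\|Q\|\le 1$. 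Hence every two-dimensional subspace of $X$ is the range of a norm-one projection.

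It remains to invoke the theorem of Kakutani (real case), in the complex version due to Bohnenblust: a normed space of dimension at least $3$ in which every two-dimensional subspace is $1$-complemented is an inner product space. I expect this to be the main obstacle if it has to be proved rather than quoted. I would first do the real three-dimensional case via Blaschke's characterization of ellipsoids. A norm-one projection onto $P$ along $x$ means that the lines parallel to $x$ touching $B_X$ meet it along the planar curve $B_X\cap P$, so every shadow boundary of $B_X$ is planar. Blaschke's theorem says that a convex body all of whose shadow boundaries are planar is an ellipsoid. For the complex case I would apply the real argument to suitable real three-dimensional subspaces, or follow Bohnenblust's reduction, and then check that the resulting real inner product is compatible with multiplication by $i$. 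Combining this with the reduction in the first paragraph gives the parallelogram law throughout $X$, as required.
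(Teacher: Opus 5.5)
The paper gives no proof of this theorem; it quotes it from Day and James. Your proposal is correct and follows the standard route in the literature. Symmetry turns the norming functional of a point where $f$ attains its norm into a norm-one projection onto $\ker f$. Kakutani's theorem, or Bohnenblust's in the complex case, then finishes the argument. Your reduction to three-dimensional subspaces through Jordan--von Neumann is also sound, because Birkhoff--James orthogonality in a subspace is computed with the same norm, so symmetry passes to subspaces.

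Two side remarks in your last paragraph need correcting if you want to go beyond quoting Kakutani and Bohnenblust.

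\emph{Complex case.} You cannot pass to real three-dimensional subspaces. Complex orthogonality $x\perp_B y$ asks for $\|x+\lambda y\|\ge\|x\|$ for all complex $\lambda$, which is strictly stronger than the real-scalar condition. So symmetry of the complex relation does not transfer to the real relation on a real subspace, and there you should rely on Bohnenblust's theorem.

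\emph{Real Blaschke sketch.} Symmetry only gives that $S_X\cap P$ lies in the shadow boundary in direction $x$. The full shadow boundary is
$\{z\in S_X: z\perp_B x\}=S_X\cap\bigcup_{f\in J(x)}\ker f$.
This set is planar only when $x$ is a smooth point, and smoothness is not known a priori. So ``every shadow boundary is planar'' is not justified at that stage. The geometric input must be the weaker condition you actually proved: every central plane section has some direction along which all lines through its boundary support $B_X$. That is exactly Kakutani's hypothesis.
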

A local version of symmetry of Birkhoof-James orthogonality was introduced and studied by Sain in \cite{Sain17}. For $x\in X$ is said to be l\textit{eft-symmetric (right symmetric)}  if for any $y\in X$ $x\perp_B y\implies y\perp_B x$ ($y\perp_B x\implies x\perp_B y$). Given a nonempty subset $\mathcal{A}$ of $X$, we say $x$ is \textit{local-$\mathcal{A}$ left symmetric (right symmetric)} if for all $y\in \mathcal{A}$, $x\perp_B y\implies y\perp_B x$ ($y\perp_B x\implies x\perp_B y$). Readers can go through the book \cite[Chapter 7]{MPS} for a detailed overview on local symmetry of Birkhoff-James orthogonality. \\ 
 Poon \cite[Prop. 1.6]{Poon} observed the following result which connects $S0I$ property with Birkhoff-James orthogonality.
 \begin{theorem}\label{connection; Poon}
     A normed space $X$ has the $S0I$ property if and only if, for every invertible $A \in L(X),$  and $x\neq 0$
and $x\perp_B Ax$ implies there exists  $z \neq 0$ such that $Az\perp_B  z.$	
 \end{theorem}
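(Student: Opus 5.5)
We prove Theorem \ref{connection; Poon} by translating membership of $0$ in a numerical range into a Birkhoff-James orthogonality statement via Theorem \ref{BJ_James}, and then translating back through the substitution $z=Ay$.

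The first step is the translation itself. For any $T\in L(X)$ and $x\in S_X$, Theorem \ref{BJ_James} gives
\[
\bigl(\exists\, x^*\in J(x):\ x^*(Tx)=0\bigr) \iff x\perp_B Tx .
\]
Hence $0\in W(T)$ if and only if $x\perp_B Tx$ for some unit vector $x$. Since Birkhoff-James orthogonality is homogeneous in both arguments, this is equivalent to $x\perp_B Tx$ for some $x\neq 0$.

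The second step handles $T^{-1}$. By the first step, $0\in W(A^{-1})$ if and only if there is $y\neq0$ with $y\perp_B A^{-1}y$. Put $z=A^{-1}y$. Because $A$ is invertible, $z\neq 0$ exactly when $y\neq0$, and $y=Az$. So the condition becomes $Az\perp_B z$ for some $z\neq0$.

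The third step combines the two. $X$ has $S0I$ if and only if for every $A\in GL(X)$ the implication $0\in W(A)\Rightarrow 0\in W(A^{-1})$ holds. By the two previous steps, this says exactly: whenever $x\perp_B Ax$ for some $x\neq0$, there exists $z\neq0$ with $Az\perp_B z$. Both directions of the equivalence follow at once, since each step is an equivalence.

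No real obstacle is expected. The only points needing care are the normalization $x\in S_X$ versus $x\neq0$, which is handled by homogeneity, and the fact that the quantifier over $A$ ranges over all of $GL(X)$, which is closed under inversion.
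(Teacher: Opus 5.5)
Your proof is correct: translating $0\in W(T)$ into $x\perp_B Tx$ for some $x\in S_X$ via Theorem \ref{BJ_James}, using homogeneity to pass to arbitrary $x\neq 0$, and substituting $z=A^{-1}y$ for the inverse gives exactly the stated equivalence. The paper gives no proof of its own and cites the result from Poon \cite[Prop. 1.6]{Poon}; your argument is the standard one behind that result and behind the paper's later uses of it (e.g.\ Proposition \ref{prop1}).
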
   
It immediately shows that if Birkhoff-James orthogonality is symmetric then $X$ has $S0I$. \emph{But what about the converse?}\\
In this article, we mainly investigate this particular question. We show that, for smooth and almost locally uniformly rotund spaces, the $S0I$ property imposes strong restrictions on the underlying norm. Our main result is the following:

\begin{theorem}\label{main}
    Let $X$ be a smooth and ALUR normed space. Then  $X$ has the simultaneous zero inclusion property ($S0I$) if and only if Birkhoff-James orthogonality in $X$ is symmetric, i.e., for all $x, y\in X$, $x\perp_B y\implies y\perp_B x.$
\end{theorem}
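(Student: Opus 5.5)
The easy direction is immediate from Theorem~\ref{connection; Poon}. If Birkhoff-James orthogonality is symmetric and $x\perp_B Ax$ with $x\neq0$, then $Ax\perp_B x$. So $z=x$ is the required nonzero vector, and $X$ has $S0I$. For the converse I argue by contradiction. Suppose $X$ is smooth, ALUR and has $S0I$, but there are unit vectors $x,y$ with $x\perp_B y$ and $y\not\perp_B x$. Smoothness gives unique supporting functionals $x^*=J(x)$ and $y^*=J(y)$, and James' Theorem~\ref{BJ_James} translates the assumptions into $x^*(y)=0$ and $y^*(x)\neq 0$.

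The plan is to build an invertible $A$ with $x\perp_B Ax$ (so $0\in W(A)$) such that no nonzero $z$ satisfies $Az\perp_B z$, contradicting Theorem~\ref{connection; Poon}. The natural candidate is a perturbation of the identity along the pair $x,y$:
\[
A=I+t\,(y^*\otimes y)+s\,(x^*\otimes y)+\ldots,
\]
with parameters chosen so that $Ax\in\ker x^*$, i.e.\ $Ax$ is a multiple of $y$ or of an element of $\ker x^*$. Invertibility follows for small rank-one corrections, or by explicitly inverting a rank-two perturbation of $I$. A cleaner alternative is $A=\lambda P+\mu(I-P)$, where $P$ is a projection adapted to $x,y$, chosen so that $A$ maps $x$ into $\ker x^*$ while staying close to a ``rotation'' in the plane of $x,y$.

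The condition $Az\perp_B z$ reads $J(Az)(z)=0$. The aim is to show that, for a suitable choice of parameters, the continuous map $z\mapsto J(Az)(z)$ on $S_X$ (continuity comes from smoothness) stays away from $0$ everywhere except possibly near a small set of vectors. One then has to rule out zeros near that set. The strategy is to make $A$ close to a scalar multiple $\alpha I$ away from the two-dimensional span of $x,y$. There $J(Az)(z)\approx \alpha\|z\|$ is nonzero. The zero condition then forces $z$ to be essentially in, or close to, $\mathrm{span}\{x,y\}$. This is where a limiting argument enters. Take a sequence of operators $A_n$ whose potential ``bad'' vectors $z_n$ satisfy $J(A_nz_n)(z_n)=0$. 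Pass to functionals $x_m^*$ so that the hypothesis $\lim_m\lim_n x_m^*\bigl(\tfrac{z_n+x}{2}\bigr)=1$ (or the same with $y$) holds. The ALUR property then forces $z_n\to x$ (or $z_n\to y$) in norm. Smoothness, via norm-to-weak$^*$ continuity of $J$, then passes the orthogonality to the limit, yielding $y\perp_B x$, which contradicts $y^*(x)\neq0$.

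The main obstacle is the construction of $A$ together with the localization step. One must show that every solution of $Az\perp_B z$ is forced toward $x$ or $y$, with no compactness available in infinite dimensions. The ALUR hypothesis is meant exactly to replace compactness: it upgrades the approximate norming condition $x_m^*(z_n)\to1$ to norm convergence. The delicate part is arranging $A$ (probably as a one-parameter family $A_t$ with $t\to0$) so that a near-solution $z$ indeed produces such an approximate norming functional for $x$ or $y$. I would first try $A_t=I-\tfrac{1}{t}\,x^*\otimes x + y^*\otimes x$-type corrections, and compute $J(A_tz)(z)$ explicitly in the two-dimensional span before handling general $z$.
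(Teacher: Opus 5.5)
\textbf{There is a genuine gap: the proposal never constructs the operator.} Your easy direction is fine. For the converse, however, the proposal stops short of an argument. The operator $A$ is never fixed: the candidates end in ``$+\ldots$'', ``a projection adapted to $x,y$'' and ``type corrections''. The decisive claim, that no $z\neq 0$ satisfies $Az\perp_B z$, is only named as the main obstacle.

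Several sketched steps would also fail as written:
\begin{enumerate}[(a)]
\item Invertibility cannot come from smallness of the correction, since $x^*(Ax)=0$ forces $\|(A-I)x\|\geq |x^*((A-I)x)|=1$.
\item Nothing in the sketch produces functionals $x_m^*$ with $\lim_m\lim_n x_m^*\bigl(\tfrac{z_n+x}{2}\bigr)=1$ from ``bad'' vectors $z_n$.
\item Passing $A_nz_n\perp_B z_n$ to a limit gives $y\perp_B x$ only if $A_nz_n$ tends to a multiple of $y$ \emph{and} $z_n$ tends to a multiple of $x$. You localize only $z_n$, and the branch $z_n\to y$ gives nothing.
\end{enumerate}
The root difficulty is that $J(Az)(z)$ takes the supporting functional at $Az$. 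So it depends nonlinearly on $A$, and you have no handle on it.

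\textbf{The missing idea is to pass to the inverse.} Put $S=A^{-1}$ and $w=Az/\|Az\|$. Then $Az\perp_B z$ if and only if $J(w)(Sw)=0$. So what you need is $Sy=x$ (which gives $x\perp_B Ax$) together with $0\notin W(S)$. In this form the functional sits at the very point where $S$ is evaluated, and $v^*(Sv)$ is linear in $S$; here $v^*$ is the supporting functional of $v\in S_X$.

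The paper takes $S_\lambda v=\lambda v+y^*(v)(x-\lambda y)$, so that $S_\lambda y=x$ and
\[
v^*(S_\lambda v)=h(v)+\lambda g(v),\qquad h(v)=y^*(v)\,v^*(x),\quad g(v)=1-y^*(v)\,v^*(y).
\]
The argument then runs as follows.
\begin{itemize}
\item $\mathrm{Re}\,g\geq 0$ on $S_X$, and $g$ vanishes only on the circle $\{\alpha y:|\alpha|=1\}$, because $y^*$ exposes $y$ (which ALUR gives).
\item On that circle $h\equiv y^*(x)=\xi$, normalized so that $\mathrm{Re}\,\xi>0$.
\item Continuity of $h$, from norm-to-weak$^*$ continuity of $J$, gives a neighbourhood $U$ of the circle on which $\mathrm{Re}\,h>\mathrm{Re}\,\xi/2$.
\item ALUR, used only in the form ``$u_n\in S_X$, $y^*(u_n)\to 1$ implies $u_n\to y$'' after rotating by unimodular scalars, gives $\mathrm{Re}\,g\geq c>0$ on $S_X\setminus U$.
\end{itemize}
Note that ALUR localizes the vector carrying the functional (your $Az/\|Az\|$) near $\{\alpha y\}$, not $z$. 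It is used only to get this uniform lower bound.

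For real $\lambda>1/c$ this yields $\mathrm{Re}\,v^*(S_\lambda v)>0$ on all of $S_X$. Moreover $S_\lambda$ is explicitly invertible, and $T=S_\lambda^{-1}$ satisfies $Tx=y$. Hence $0\in W(T)$ but $0\notin W(T^{-1})$. A single operator with one large parameter suffices; no family $A_t$ and no limit over bad vectors is needed.
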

 It can be readily seen that Theorem \ref{symmetry} and Theorem \ref{main} together shows: \\
\emph{If $X$ is a smooth and ALUR Banach space with $\dim X\geq 3$ then $X$ has $S0I$ if and only if the norm of $X$ is induced by an inner product.}\\
As  a consequence of our technique, we investigate the $S0I$ in finite-dimensional settings and obtained some stronger results. In the last section we discuss that in a real finite-dimensional Banach space, some local symmetry in $L(X)$ guarantees the $S0I$ property for the invertible operators.

    \section{Some preliminary observations.}
   
 We begin this section with some basic observations follows from the connection between $S0I$ and Birkhoff-James orthogonality. For that matter, we define the following two sets.  Let $T\in L(X)$ and we consider the following two sets:
        \[\mathcal{P}_T=\{x\in S_X: x\perp_B Tx\};\\
        \mathcal{Q}_T=\{y\in S_X: Ty\perp_B y\}.\]

	One can easily see that if $T$ is not invertible then $\mathcal{P}_T$ and $\mathcal{Q}_T$ are always nonempty. In fact, $\ker T\subset \mathcal{P}_T\cap \mathcal{Q}_T.$ In case of invertible $T$, the sets can be empty. For example, let us take two-dimensional Euclidean space $\mathbb{R}^2$ and let $T=\begin{pmatrix}
	    \frac{1}{\sqrt{2}} &  \frac{1}{\sqrt{2}}\\ - \frac{1}{\sqrt{2}}  &  \frac{1}{\sqrt{2}}
	\end{pmatrix}.$  In this case, $\langle Tx, x\rangle=\frac{\|Tx\|}{\sqrt{2}}\neq 0.$  In this case  $\mathcal{P}_T=\mathcal{Q}_T=\emptyset.$ In the next proposition we write how the set $\mathcal{P}_T, \mathcal{Q}_T$ can be used to characterize the $S0I$ property, following from Theorem \ref{connection; Poon}. 
  
	\begin{prop}\label{prop1}
	    Let $X$ be a Banach space and let $T\in L(X)$ be invertible. Then $X$ has $S0I$ at $T$ if and only if either of the following holds true:
        \begin{itemize}
            \item [(i] $\mathcal{P}_T=\emptyset$ as well as $\mathcal{Q}_T=\emptyset.$
            \item[(ii)] Both $\mathcal{P}_T$ and $\mathcal{Q}_T$ are nonempty.  
        \end{itemize}
	\end{prop}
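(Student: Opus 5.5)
The plan is to turn both numerical-range conditions in the definition of $S0I$ at $T$ into statements about $\mathcal{P}_T$ and $\mathcal{Q}_T$, using Theorem \ref{BJ_James} (which is also the content of Theorem \ref{connection; Poon}).

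\textbf{First translation.} I claim that $0\in W(T)$ if and only if $\mathcal{P}_T\neq\emptyset$.
\begin{itemize}
\item Suppose $0\in W(T)$. Then there is $(x,x^*)\in\Pi$ with $x^*(Tx)=0$. Since $x^*\in J(x)$, Theorem \ref{BJ_James} gives $x\perp_B Tx$, so $x\in\mathcal{P}_T$.
\item Conversely, if $x\in\mathcal{P}_T$, Theorem \ref{BJ_James} produces $x^*\in J(x)$ with $x^*(Tx)=0$, so $0\in W(T)$.
\end{itemize}

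\textbf{Second translation.} I claim that $0\in W(T^{-1})$ if and only if $\mathcal{Q}_T\neq\emptyset$.
\begin{itemize}
\item Suppose $0\in W(T^{-1})$. As above, there is $y\in S_X$ with $y\perp_B T^{-1}y$. Put $z=T^{-1}y\neq 0$, so that $Tz\perp_B z$. Birkhoff--James orthogonality is homogeneous in both arguments, so $z/\|z\|\in\mathcal{Q}_T$.
\item Conversely, take $z\in\mathcal{Q}_T$ and set $y=Tz/\|Tz\|$. Then $y\perp_B T^{-1}y$, and hence $0\in W(T^{-1})$.
\end{itemize}
Invertibility of $T$ is used exactly here, to pass between $y$ and $z$.

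Combining the two translations, $S0I$ at $T$ becomes the implication $\mathcal{P}_T\neq\emptyset\Rightarrow\mathcal{Q}_T\neq\emptyset$. This rules out precisely the configuration $\mathcal{P}_T\neq\emptyset,\ \mathcal{Q}_T=\emptyset$. I would then check the two listed alternatives directly.
\begin{itemize}
\item In case (i), $0\notin W(T)$, so the implication in the definition holds vacuously.
\item In case (ii), $0\in W(T^{-1})$, so the implication holds trivially.
\end{itemize}
This proves the ``if'' direction.

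\textbf{Main obstacle: the ``only if'' direction.} Here one must also exclude the mixed configuration $\mathcal{P}_T=\emptyset,\ \mathcal{Q}_T\neq\emptyset$. The one-sided implication defining $S0I$ at $T$ does not exclude it by itself. I would handle it through the symmetry between $T$ and $T^{-1}$.

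The two translations applied to $T^{-1}$ give
\[
\mathcal{P}_{T^{-1}}\neq\emptyset\iff 0\in W(T^{-1})\iff \mathcal{Q}_T\neq\emptyset,
\qquad
\mathcal{Q}_{T^{-1}}\neq\emptyset\iff 0\in W(T)\iff \mathcal{P}_T\neq\emptyset.
\]
So the mixed configuration for $T$ is exactly the forbidden configuration for $T^{-1}$. Hence whenever $X$ has $S0I$ at $T^{-1}$ as well, the mixed case cannot occur, and (i) or (ii) must hold. This is automatic when $X$ has $S0I$ globally, since $T^{-1}$ is then also an invertible operator to which the definition applies. This is the setting in which the proposition is used later.

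I therefore expect the genuine content of the proof to lie in this step. I would write the proof so that it records explicitly that the dichotomy (i)/(ii) is equivalent to the two-sided condition $0\in W(T)\iff 0\in W(T^{-1})$. I would also state explicitly that this two-sided condition is what is needed to discard $\mathcal{P}_T=\emptyset\neq\mathcal{Q}_T$, while the one-sided condition alone gives exactly ``(i), (ii), or the mixed case.''
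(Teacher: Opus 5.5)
Your two translations, $0\in W(T)\iff\mathcal{P}_T\neq\emptyset$ and (via invertibility) $0\in W(T^{-1})\iff\mathcal{Q}_T\neq\emptyset$, are exactly what the paper relies on. It gives no separate proof, only the remark that the proposition follows from Theorem~\ref{connection; Poon}. Your ``if'' direction is complete. Your doubt about the ``only if'' direction is also justified, and no better argument can close it: with the one-sided definition of $S0I$ at $T$, that direction is false as stated. So you should not present the extra hypothesis ``$S0I$ at $T^{-1}$'' as a way of handling the mixed case inside the proof. That hypothesis changes the statement, and you should say explicitly that the proposition needs correcting. That is a stronger conclusion than ``the one-sided implication does not exclude it by itself.''

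The mixed configuration does occur. On $\ell_1^2$ (real or complex) take
\[
S=\begin{pmatrix}3&0\\-2&1\end{pmatrix},\qquad S^{-1}=\frac13\begin{pmatrix}1&0\\2&3\end{pmatrix}.
\]
\begin{itemize}
\item For $v=(a,b)\in S_X$ with $ab\neq0$, the unique $v^*\in J(v)$ gives $\mathrm{Re}\,v^*(Sv)=3|a|+|b|-2\,\mathrm{Re}(\bar b a/|b|)\geq|a|+|b|=1$.
\item For $v=ae_1$ one gets $v^*(Sv)=3-2ta$ with $|t|\leq1$.
\item For $v=be_2$ one gets $v^*(Sv)=1$.
\end{itemize}
Hence $0\notin W(S)$, so $\mathcal{P}_S=\emptyset$ and $X$ has $S0I$ at $S$ vacuously. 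But $u=\tfrac13(1,2)\in S_X$ satisfies $Su=e_1$, and $(1,-\tfrac12)\in J(e_1)$ annihilates $u$. So $Su\perp_B u$ and $u\in\mathcal{Q}_S$, and neither (i) nor (ii) holds. More generally, any invertible $T_0$ witnessing the failure of $S0I$ gives the counterexample $T_0^{-1}$; such $T_0$ exist on $\ell_1^n$ and $\ell_\infty^n$ by Bra\v{c}i\v{c}--Diogo.

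The correct local statement is the one you isolate. Conditions (i) or (ii) hold iff $0\in W(T)\Leftrightarrow 0\in W(T^{-1})$, i.e.\ iff $X$ has $S0I$ at both $T$ and $T^{-1}$. $S0I$ at $T$ alone only excludes $\mathcal{P}_T\neq\emptyset=\mathcal{Q}_T$. The later uses in the paper are unaffected, since they need only the ``if'' direction or the global version, both of which your translations establish. These are the corollary that follows, the proposition on left/right symmetric points, and the sufficiency part of Theorem~\ref{main}.
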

	
From the above proposition we would like to note the following useful observation.
\begin{cor}
    Let $X$ be a Banach space. Then $X$ does not have $S0I$ if and only if there exists an invertible $T\in L(X)$ such that $\mathcal{P}_T\neq \emptyset$ but $\mathcal{Q}_T=\emptyset.$ 
\end{cor}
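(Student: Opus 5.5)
This follows from Proposition \ref{prop1} by a short logical argument; the only subtlety is stating the negation correctly. The plan is to negate the characterization of $S0I$ at a single invertible operator and then quantify over all invertible operators. By definition, $X$ fails $S0I$ exactly when there is an invertible $T\in L(X)$ at which $X$ fails $S0I$. By Proposition \ref{prop1}, $X$ fails $S0I$ at an invertible $T$ exactly when neither (i) nor (ii) holds. That means exactly one of $\mathcal{P}_T$, $\mathcal{Q}_T$ is empty.

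So the failure of $S0I$ is equivalent to the existence of an invertible $T$ with either $\mathcal{P}_T\neq\emptyset=\mathcal{Q}_T$ or $\mathcal{Q}_T\neq\emptyset=\mathcal{P}_T$. The backward direction of the corollary is then immediate. For the forward direction, the one genuine step is to reduce the second case to the first. I would do this by passing to the inverse, using the identities
\[
\mathcal{P}_{T^{-1}}=\Big\{\tfrac{Tx}{\|Tx\|}: x\in\mathcal{Q}_T\Big\},\qquad \mathcal{Q}_{T^{-1}}=\Big\{\tfrac{Tx}{\|Tx\|}: x\in\mathcal{P}_T\Big\}.
\]

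To check the first identity, take $y\in S_X$ and set $x=T^{-1}y\neq 0$. Then
\[
y\perp_B T^{-1}y \iff Tx\perp_B x \iff T\tfrac{x}{\|x\|}\perp_B \tfrac{x}{\|x\|},
\]
by homogeneity of Birkhoff–James orthogonality in each argument. The second identity is checked in the same way. Both bijections use only invertibility of $T$ and scaling invariance of $\perp_B$.

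Now suppose $\mathcal{Q}_T\neq\emptyset=\mathcal{P}_T$. Then $T^{-1}$ is invertible with $\mathcal{P}_{T^{-1}}\neq\emptyset$ and $\mathcal{Q}_{T^{-1}}=\emptyset$, so $T^{-1}$ serves as the required operator. This completes the equivalence.
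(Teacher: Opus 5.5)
Your proof is correct and is essentially the paper's argument: the paper writes no proof and simply reads the corollary off Proposition \ref{prop1}, and your swap $T\mapsto T^{-1}$ via $\mathcal{P}_{T^{-1}}\leftrightarrow\mathcal{Q}_T$ is a valid way to get the orientation $\mathcal{P}_T\neq\emptyset=\mathcal{Q}_T$ that the paper leaves implicit. One small caution: under the one-sided definition, an invertible $T$ with $\mathcal{P}_T=\emptyset\neq\mathcal{Q}_T$ has $S0I$ at $T$ vacuously, so your claim ``fails at $T$ iff exactly one of the sets is empty'' (taken from Proposition \ref{prop1}) overstates matters, though you only rely on its correct directions; reading failure at $T$ directly as $0\in W(T)$ and $0\notin W(T^{-1})$, i.e.\ $\mathcal{P}_T\neq\emptyset=\mathcal{Q}_T$ by your own bijection, gives the corollary with no case split at all.
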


  Let us denote $\mathcal{L}_X$ and $\mathcal{R}_X$ as the collection of all left and right symmetric elements of $X,$ respectively. In particular, \[ \mathcal{L}_X=\{x\in X: x\perp_B y\implies y\perp_B x, \forall y\}; \quad \mathcal{R}_X=\{x\in X: y\perp_B x\implies x\perp_B y, \, \forall y\}.\]
  Considering these elements, we observe the following. We do not include the proof as it follows easily. 
    \begin{prop}\label{prop; $S0I$}
        Let $T\in L(X)$ be invertible. Suppose that  either of the following holds true:
        \begin{itemize}
            \item [(i)] $\mathcal{P}_T\cap \mathcal{L}_X \neq \emptyset$;
            \item [(ii)] $\mathcal{Q}_T\cap \mathcal{R}_X\neq \emptyset$;
            \item[(iii)] $T(\mathcal{P}_T)\cap \mathcal{R}_{T(X)}\neq \emptyset$;
            \item[(iv)] $T(\mathcal{Q}_T)\cap \mathcal{L}_{T(X)}\neq \emptyset$.
        \end{itemize}
        Then $X$ has $S0I$ at $T.$ Consequently, if it holds for every invertible $T\in L(X)$ then $X$ has $S0I$.
    \end{prop}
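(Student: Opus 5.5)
The plan is to reduce everything to Proposition \ref{prop1}. By that result, $X$ has $S0I$ at an invertible $T$ as soon as $\mathcal{P}_T$ and $\mathcal{Q}_T$ are both nonempty. In each of the four cases the hypothesis directly gives an element of one of the two sets. The symmetry assumption should then transfer that same element into the other set. Two facts are used throughout. First, since $T$ is invertible, $T(X)=X$, so $\mathcal{L}_{T(X)}=\mathcal{L}_X$ and $\mathcal{R}_{T(X)}=\mathcal{R}_X$. Second, Birkhoff-James orthogonality is homogeneous ($x\perp_B y$ implies $\alpha x\perp_B \beta y$ for all scalars), so the normalization to $S_X$ causes no trouble.

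\emph{Case (i).} Take $x\in\mathcal{P}_T\cap\mathcal{L}_X$, so $x\in S_X$ and $x\perp_B Tx$. Left symmetry of $x$, applied with $y=Tx$, gives $Tx\perp_B x$, hence $x\in\mathcal{Q}_T$. Thus both sets contain $x$.

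\emph{Case (ii).} Take $y\in\mathcal{Q}_T\cap\mathcal{R}_X$, so $Ty\perp_B y$. Right symmetry of $y$ gives $y\perp_B Ty$, so $y\in\mathcal{P}_T$.

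\emph{Case (iii).} Take $x\in\mathcal{P}_T$ with $Tx\in\mathcal{R}_X$. Since $x\perp_B Tx$ and $Tx$ is right symmetric, we get $Tx\perp_B x$. Hence $x\in\mathcal{Q}_T$.

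\emph{Case (iv).} Take $y\in\mathcal{Q}_T$ with $Ty\in\mathcal{L}_X$. From $Ty\perp_B y$ and left symmetry of $Ty$ we obtain $y\perp_B Ty$, so $y\in\mathcal{P}_T$.

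In every case $\mathcal{P}_T\neq\emptyset\neq\mathcal{Q}_T$, so alternative (ii) of Proposition \ref{prop1} holds and $X$ has $S0I$ at $T$. The final assertion is then immediate from the definition of $S0I$ as holding at every invertible $T$.

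There is no real obstacle here. The only points needing care are checking that $T(X)=X$, so the local symmetry sets relative to $T(X)$ are the global ones, and keeping track of which side of $\perp_B$ the symmetric element sits on in each case.
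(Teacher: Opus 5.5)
Your proposal is correct and follows the same approach as the paper. In each case the symmetry hypothesis moves the given element of $\mathcal{P}_T$ (resp.\ $\mathcal{Q}_T$) into the other set, and Proposition~\ref{prop1} then gives $S0I$ at $T$; the paper writes out only cases (i) and (iii), while you also note that $T(X)=X$, so $\mathcal{L}_{T(X)}=\mathcal{L}_X$ and $\mathcal{R}_{T(X)}=\mathcal{R}_X$.
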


\begin{proof}
We only show (i) and (iii) as (ii) and (iv) follows similarly.\\
    (i) Suppose that $x\in \mathcal{P}_T\cap \mathcal{L}_X.$ Then $x\perp_B Tx$. Also, $x$ being left symmetric we have $Tx\perp_B x.$ This implies $\mathcal{P}_T\neq \emptyset$ as well as $\mathcal{Q}_T\neq\emptyset.$ Thus from Proposition \ref{prop1} we get $X$ has $S0I$ at $T.$  \\
    (iii) Let $y\in T(\mathcal{P}_T)\cap \mathcal{R}_{T(X)}.$ Then $y=Tx$ for some $x\in \mathcal{P}_T.$ Since $Tx$ is right symmetric,  $Tx\perp_B x$ and therefore, $\mathcal{Q}_T\neq \emptyset.$ This proves again from Proposition \ref{prop1} that $X$ has $S0I$. 
\end{proof}

The above proposition can help us to find some invertible operators with $S0I$ property.
\begin{example}
    Let $X=\ell_p^3$ with $1<p<\infty$ and let $T\in L(\ell_p^3)$ be the following with respect to standard ordered basis:
    \[T=\begin{pmatrix}
        0 & 2 & 1\\
        1 & 1 & 0\\
        1 & 0 & 2
    \end{pmatrix}\]
    Clearly, $T$ is invertible. Since $Te_1 \in span\{e_2, e_3\},$ it follows that $e_1\perp_B Te_1.$ Moreover, from \cite{CSS} we have $e_1$ is a left symmetric point in $\ell_p^3$ Thus we have $e_1\in \mathcal{P}_T\cap \mathcal{L}_X.$ This implies from Proposition \ref{prop; $S0I$} that $X$ has $S0I$ at $T.$
\end{example}

\section{Main results.}
To prove the main theorem of this article, we will construct an invertible bounded linear operator on $X $ in such a way that it does not have $S0I$ property.  We  need the following geometric property of Banach space which is immediate from the definition of exposed points.
\begin{lemma}\label{lem; smooth and exposed}
   Let $x \in X$ be such that $x$ is smooth as well as exposed. If for any $v\in S_X,$  $J(x)\cap J(v)\neq \emptyset$ then $x=v.$ 
\end{lemma}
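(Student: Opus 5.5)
Since $x$ is smooth, $J(x)$ is a singleton; write $J(x)=\{x^*\}$. The plan is to show directly that this unique supporting functional $x^*$ is the exposing functional for $x$, and then to read off the conclusion from the definition of an exposed point.

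First, I use exposedness: by definition there is some $f\in J(x)$ such that for every $y\in S_X$, $f(y)=1$ forces $y=x$. Since $J(x)=\{x^*\}$, necessarily $f=x^*$. Thus the single supporting functional of $x$ exposes $x$, i.e. $\{y\in S_X: x^*(y)=1\}=\{x\}$.

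Next, suppose $v\in S_X$ satisfies $J(x)\cap J(v)\neq\emptyset$. The only candidate in the intersection is $x^*$, so $x^*\in J(v)$, which means $x^*(v)=1$ with $v\in S_X$. By the exposing property of $x^*$ established above, $v=x$, as claimed.

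There is no real obstacle here. The only point requiring care is that smoothness is what guarantees the functional in $J(x)\cap J(v)$ coincides with the exposing functional. Without smoothness, $J(x)$ could contain several functionals, and a non-exposing one might also support some $v\neq x$.
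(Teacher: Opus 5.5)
Your argument is correct and is the direct verification the paper has in mind; the paper gives no proof and simply calls the lemma immediate from the definition of exposed points. Smoothness makes the exposing functional the unique element $x^*$ of $J(x)$, and then $x^*\in J(v)$ gives $x^*(v)=1$ with $v\in S_X$, which forces $v=x$.
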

Let us now prove the main theorem.\\
\noindent \textbf{Proof of Theorem \ref{main}.}
To show  the sufficient part suppose first that $T\in GL(X)$ is such that $0\in W(T).$ Then we have $x\perp_B Tx$ for some $x\in S_X.$ Since Birkhoff-James orthogonality is symmetric, it follows that $Tx\perp_B x.$ This follows from Proposition \ref{prop1} that $0\in W(T^{-1})$. This shows that $X$ has $S0I$. \vspace{.2cm}

\noindent Next we prove the necessary part. Suppose that $X$ has $S0I$, and on the contrary, there exist $x,y\in S_X$ such that $x\perp_B y$ but $y\not\perp_B x$. Since $X$ is smooth, let $J(y)=\{y^*\}$. Following Theorem \ref{BJ_James},  we have
$\xi:=y^*(x)\neq0$. Without loss of generality we may assume that $Re \, \xi>0.$ Consider the following two functions $h, g: S_X\to \mathbb C$ defined as 
$$h(v)=y^*(v)v^*(x) \,\, \mbox{and} \,\, g(v)=1-y^*(v)v^*(y).$$  Notice that $Re\,g(v)\geq0$ for every $v\in S_X$, since $|y^*(v)|\leq1$ and $|v^*(y)|\leq1$. Moreover, if $g(v)=0$, then we get $y^*(v)v^*(y)=1$. This implies  $y^*(v)=\alpha, v^*(y)=\overline{\alpha}$, for some $|\alpha|=1.$ It follows that $J(\overline{\alpha}v)=\{y^*\}=J(y)$ i.e., $J(\overline{\alpha}v)\cap J(y)\neq \emptyset$. Since $y$ is smooth and exposed, it follows from Lemma \ref{lem; smooth and exposed} that $v=\alpha y$, and consequently $h(v)=\alpha \overline{\alpha} y^*(x)=\xi\neq0$. Thus we have $g(v)=0\Longrightarrow h(v)=\xi\neq0.$  Note that $h, g$ both are continuous map. Indeed, let $\{v_n\}\subset S_X$ be such that $v_n\to v$ with $J(v_n)=\{v_n^*\}\subset S_{X^*}$. Since $B_{X^*}$ is weak$^*$-compact, there exists, $z^*\in B_{X^*}$ such that $v_n^*\overset{w^*}{\rightarrow} z^*.$ It is straightforward to see that $z^*(v)=1$ and therefore, $z^*=v^*.$
\begin{eqnarray*}
|h(v_n)-h(v)|&=& |y^*(v_n)v_n^*(x)-y^*(v)v^*(x)|\\
&=& |y^*(v_n)v_n^*(x)-y^*(v)v_n^*(x)+ y^*(v)v_n^*(x)- y^*(v)v^*(x)|\\
&\leq& |y^*(v_n)v_n^*(x)-y^*(v)v_n^*(x)|+ |y^*(v)v_n^*(x)- y^*(v)v^*(x)|\\
&\leq& \|y^*\|\|v_n-v\||v_n^*(x)| + |y^*(v)||v_n^*(x)-v^*(x)|.
 \end{eqnarray*}
This inequality shows that $h$ must be continuous. We can show the continuity of $g$ with similar argument. Let $K=\{v\in S_X:g(v)=0\}=\{\alpha y: \alpha\in \mathbb C, |\alpha|=1\}$. By the preceding observation, $h(v)=\xi$ for all $v\in K$. Since $K$ is a compact subset of $S_X$ and $h$ is continuous, we can consider the open set $U=Re\, h^{-1}(\frac{Re\, \xi}{2}, \infty)\cap S_X$ with respect to the subspace topology such that for any $v\in U,$ $Re \, h(v)>\frac{Re \, \xi}{2}>0.$
Now we claim that on the set $S_X\setminus U$, we have $Re\, g(v)\geq c>0$. If possible let $\inf\{Re\, g(v): v\in S_X\setminus U\}=0.$ This implies there exists $\{v_n\}\subset S_X\setminus U$ such that $Re\,g(v_n)\to 0.$ So we have $y^*(v_n)v_n^*(y)\to 1.$ It forces $Re\,y^*(\beta v_n)\to 1$ as well as $Re\,v_n^*(\overline{\beta}y)\to 1,$ for some $\beta\in \mathbb C$ and $|\beta|=1.$ From the last limit we can write $\lim\limits_{n\to \infty} Re\, v_n^*\bigg(\frac{ v_n+ \overline{\beta} y}{2}\bigg)=1$  Since $X$ is ALUR, we get $ v_n\to \overline{\beta}y$ and therefore, there exists $N\in \mathbb N$ such that for all $n\geq N,$ $v_n\in U.$ This is not possible from our assumption. This proves our claim. Hence there exists $c>0$ such that $Re\, g(v)\geq c$ on $S_X\setminus U$. Also put 
$M=\sup_{v\in S_X}|h(v)|\leq 1$ and let $|\lambda|>M/c$. Then, for $v\in S_X\setminus U$,
$|\lambda g(v)|>M\geq|h(v)|$,
and consequently $Re\, h(v)+\lambda Re\, g(v)\neq0$. For $v\in U$, both $Re\, h(v)>0$ and $Re\, \lambda g(v)>0$, so again $Re\, (h(v)+\lambda g(v))>0$. Now we define the operator on $X$ as: 
\[S_\lambda=(x-\lambda y)\otimes y^*+\lambda I.\]
From the above arguments  we have obtained  that $v^*(S_\lambda v)=h(v)+\lambda g(v)\neq0$ for every $v\in S_X$. Thus we have $S_\lambda$ is one-one. Now suppose that $w\in X$ be any nonzero element. We choose $v= \frac{1}{\lambda}w-\frac{y^*(w)}{\lambda a}(x-\lambda y).$ By a straightforward calculation one can show that $S_\lambda(v)=w.$ This proves that $S_\lambda$ is onto and consequently, it is bijective. We assume that $S_\lambda^{-1}=T.$ Note that $Tx=S_\lambda^{-1}(x)=y$ and since $x\perp_B y$, we get $x\perp_B Tx.$ On the other hand, for each $z\in S_X$ such that $z^*(S_\lambda z)\neq 0$, i.e.,  $z\not\perp_B S_{\lambda}z,$ i.e., $z\not\perp_B T^{-1}z$ for any $z\in S_X.$ This shows that $0\notin W(T^{-1})$ and contradicts the fact that $X$ has $S0I$ at $T.$ Therefore, Birkhoff-James orthogonality must be symmetric. This completes the proof of the theorem. \hfill $\square$\\ 

In the following result we show that for the case of finite-dimensional Banach space only smoothness (or, strict convexity) is sufficient to prove the above theorem. Before that we note the following lemma.

\begin{lemma}\label{Dual; Radon}
    Let $X$ be a Radon plane then $X^*$ is also a Radon plane.
\end{lemma}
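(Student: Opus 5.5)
To show that $X^*$ is a Radon plane, I will work directly with the standard definition: a two-dimensional real normed space is a Radon plane exactly when Birkhoff-James orthogonality in it is symmetric. So let $x^*,y^*\in X^*$ with $x^*\perp_B y^*$; I must show $y^*\perp_B x^*$. I may assume both are nonzero and normalized, since otherwise the claim is trivial. The main tool is Theorem \ref{BJ_James}, applied in $X^*$ and also in $X^{**}=X$. The latter is legitimate because $X$ is finite dimensional, hence reflexive.

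\textbf{Step 1 (reformulating via norming vectors).} By Theorem \ref{BJ_James} in $X^*$, there is $\phi\in J(x^*)\subset S_{X^{**}}$ with $\phi(y^*)=0$. By reflexivity, $\phi$ is evaluation at some $x\in S_X$. Thus $x^*(x)=1$ and $y^*(x)=0$. Choose any $y\in S_X$ with $y^*(y)=1$, which exists since $X$ is finite dimensional. The goal $y^*\perp_B x^*$ is then equivalent, again by Theorem \ref{BJ_James}, to finding some $z\in S_X$ with $y^*(z)=1$ and $x^*(z)=0$.

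\textbf{Step 2 (dimension count).} Since $\dim X=2$ and $y^*\neq 0$, we have $\ker y^*=\mathrm{span}\{x\}$. Likewise $\ker x^*$ is one-dimensional, say $\ker x^*=\mathrm{span}\{u\}$ with $u\in S_X$. Now $x^*(x)=1$ says $x^*$ norms $x$, and $\ker x^*=\mathrm{span}\{u\}$ means $x\perp_B u$ by Theorem \ref{BJ_James}. Because $X$ is Radon, this gives $u\perp_B x$. Hence some $w^*\in J(u)$ satisfies $w^*(x)=0$. As $\ker y^*=\mathrm{span}\{x\}$ is one-dimensional, $w^*=\gamma y^*$ for a scalar $\gamma$ with $|\gamma|=1$. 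Replacing $u$ by $\pm u$, we get $y^*(u)=1$ with $u\in S_X$ and $x^*(u)=0$. Taking $z=u$ completes the proof of $y^*\perp_B x^*$.

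\textbf{Main obstacle.} The delicate point is the identification in Step 2. One needs that the two-dimensional setting forces kernels of nonzero functionals to be lines, so that the supporting functional produced for $u$ is a multiple of $y^*$. This uses $\dim X=2$ essentially. The other subtle point is reflexivity, which lets the supporting functional of $x^*$ in $X^{**}$ be realized by a vector of $X$. Both are automatic here, so I expect the proof to be short. I will also note that the argument is over the reals, as is customary for Radon planes.
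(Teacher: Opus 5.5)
Your proposal is correct and follows essentially the same route as the paper. Both arguments take a norming vector $x$ for $x^*$ with $y^*(x)=0$, take a unit vector spanning $\ker x^*$ (your $u$, the paper's $y$), and use the Radon property to get a supporting functional of that vector vanishing at $x$, which by dimension must be $\pm y^*$. Your explicit normalization $u\mapsto \pm u$ cleanly justifies the step the paper simply asserts as ``$y^*(y)=1$''.
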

\begin{proof}
 Suppose that $x^*\perp_B y^*$ for some $x^*,y^*\in S_{X^*}$. By Theorem \ref{BJ_James}, there exists $x\in S_X$ such that $|x^*(x)|=1,$ and   $y^*(x)=0.$ Choose $y\in S_X$ with $x^*(y)=0$. Since $x^*\in J(x)\cup J(-x)$, Theorem \ref{BJ_James} gives $x\perp_B y$. As $X$ is a Radon plane, $y\perp_B x$. Hence, again by Theorem \ref{BJ_James}, there exists $z^*\in J(y)$ such that $z^*(x)=0$. Since $y^*(x)=0$ and $y^*(y)=1$, we have $z^*=c  y^*$. Therefore, $y^*\perp_B x^*,$ and thus $X^*$ is a Radon plane.

\end{proof}
\begin{theorem}\label{finite; smooth}
     Let $X$ be a finite-dimensional smooth or strictly convex Banach space. Then $X$ has $S0I$ if and only if Birkhoff-James orthogonality is symmetric in $X.$  
\end{theorem}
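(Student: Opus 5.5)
The direction ``symmetric $\Rightarrow S0I$'' is exactly the sufficiency argument in the proof of Theorem \ref{main}: if $0\in W(T)$ then $x\perp_B Tx$ for some $x\in S_X$, symmetry gives $Tx\perp_B x$, so $\mathcal{Q}_T\neq\emptyset$, and Proposition \ref{prop1} yields $0\in W(T^{-1})$. It uses neither smoothness nor convexity. For the converse I reduce both cases to Theorem \ref{main}, using that in finite dimensions the ALUR hypothesis is essentially automatic once the space is strictly convex.

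\emph{Smooth case.} I would first check the ALUR hypothesis. A closer look at the proof of Theorem \ref{main} shows that ALUR was only used at the single point $y$, and there only to ensure that $Re\, v_n^*(y)\to 1$ with $v_n^*\in J(v_n)$ forces $v_n\to \overline{\beta}y$. In finite dimensions I would argue by compactness. Pass to subsequences with $v_n\to v$ and $v_n^*\to v^*$; then $v^*\in J(v)$ and $Re\,v^*(\overline{\beta}y)=1$ (after also fixing a convergent subsequence of the $\beta$'s), so $v^*\in J(\overline{\beta}y)$. Smoothness of $y$ gives $v^*=\beta y^*$. Thus $J(v)\cap J(\overline{\beta}y)\ne\emptyset$, and $v=\overline{\beta}y$ follows from Lemma \ref{lem; smooth and exposed} \emph{provided $y$ is exposed}. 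This is the main obstacle: in a smooth, non-strictly convex space, $y$ may lie in a flat face of the sphere.

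To handle this, I would not take $x,y$ arbitrary. Suppose symmetry fails. Since $\dim X<\infty$, exposed points are dense in the extreme points, and $\Ext B_X$ spans $X$. I would try to show that a non-symmetric pair $x\perp_B y$, $y\not\perp_B x$ can be chosen with $y$ exposed. The argument would be a perturbation: non-symmetry is an open condition in $y$ (smoothness makes $y\mapsto y^*$ continuous, so $y^*(x)\neq 0$ persists), and one then adjusts $x$ to restore $x\perp_B y$ (take $x$ in $\ker$ of a supporting functional at $x$, using continuity of $J$ again). If this fails, I would instead work within two-dimensional subspaces. Symmetry in every $2$-dimensional subspace is equivalent to global symmetry, and a smooth plane has an exposed sphere point in every neighborhood of an extreme point. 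Here Lemma \ref{Dual; Radon} is presumably the intended tool, to pass between $X$ and $X^*$ when the needed point is exposed in the dual rather than in $X$.

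\emph{Strictly convex case.} Every sphere point is exposed, and compactness gives ALUR-like convergence. What is missing is smoothness of $y$, which the proof of Theorem \ref{main} uses to define $y^*$ and to identify $K$. I would choose any $y^*\in J(y)$ with $y^*(x)\neq 0$; one exists because $y\not\perp_B x$ means no element of $J(y)$ kills $x$. The operator $S_\lambda=(x-\lambda y)\otimes y^*+\lambda I$ is then still defined. If $g(v)=0$, then some $v^*\in J(v)$ satisfies $y^*(v)v^*(y)=1$, so $y^*\in J(\overline{\alpha}v)$, and strict convexity forces $\overline{\alpha}v=y$. The remaining steps of the proof of Theorem \ref{main} must be rerun with $v^*$ ranging over all of $J(v)$, so that the functions $h$ and $g$ become set-valued. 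Compactness of $\Pi$ in finite dimensions replaces the continuity arguments, and one concludes $0\notin W(S_\lambda)$ while $x\perp_B S_\lambda^{-1}x$, contradicting $S0I$. Alternatively, one can dualize: $X^*$ is smooth, and one may hope to use Lemma \ref{Dual; Radon}.
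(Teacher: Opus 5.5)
\textbf{There is a genuine gap in the smooth case.} Your sufficiency direction is fine. The smooth case, however, hinges on producing a non-symmetric pair with $y$ exposed, and this is never established.

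The perturbation sketch does not work as stated. Moving $y$ to a nearby exposed $y'$ destroys $x\perp_B y'$. Restoring it by passing to some $x'$ whose supporting functional annihilates $y'$ gives no control over $y'^*(x')$ when $x$ is not exposed: such $x'$ may accumulate at another point of the face $\{z\in S_X: x^*(z)=1\}$, where $y^*$ can vanish. The two-dimensional/dual fallback is a suggestion, not an argument.

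More importantly, the obstacle is illusory. In your own compactness step you reach $v^*=\beta y^*$ with $v^*\in J(v)$. Then $y^*(v)=\overline{\beta}$, so $h(v)=y^*(v)v^*(x)=\xi$. Hence $v\in U=\{\mathrm{Re}\,h>\mathrm{Re}\,\xi/2\}$, and $v_n\in U$ for large $n$, which is already the contradiction you need. You only need $v\in U$, never $v=\overline{\beta}y$.

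Put differently: if $g(v)=0$ and $\alpha=y^*(v)$, then $y^*\in J(\overline{\alpha}v)=\{\alpha v^*\}$ by smoothness alone. So $h\equiv\xi$ on the whole zero set of $g$, which is compact but possibly an entire rotated face. Compactness of $S_X\setminus U$ then yields $c>0$. This is what the paper's smooth-case proof amounts to; neither exposedness nor ALUR enters.

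\textbf{Strictly convex case: a different, viable route that needs one extra step.} The paper dualizes. $X^*$ is smooth, $S0I$ passes to $X^*$ by \cite[Th.~2.2]{Bracic}, and the smooth case gives symmetry in $X^*$. One returns to $X$ via the inner-product characterization (complex, or real with $\dim X\geq 3$) or via Lemma \ref{Dual; Radon} (real planes).

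Your direct route on $\Pi$ is different and viable, but it omits a necessary step. On the zero set of $g$, i.e.\ at the pairs $(\alpha y,\overline{\alpha}f)$ with $f\in J(y)$, one gets $h=f(x)$ with $f$ ranging over all of $J(y)$, not only over $y^*$. So normalizing $\mathrm{Re}\,\xi>0$ is not enough over $\mathbb{C}$. You must first rotate $x$ by a unimodular scalar so that $\mathrm{Re}\,f(x)>0$ for every $f\in J(y)$. This is possible because $\{f(x):f\in J(y)\}$ is compact and convex, and by Theorem \ref{BJ_James} together with $y\not\perp_B x$ it misses $0$. With that normalization, take $U=\{(v,v^*)\in\Pi:\mathrm{Re}\,h>\delta\}$ for small $\delta>0$; compactness of $\Pi\setminus U$ then finishes the proof.

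Observe that strict convexity is then never used. Whenever $(v,v^*)\in\Pi$ and $g(v,v^*)=0$, the functional $\alpha v^*$ with $\alpha=y^*(v)$ lies in $J(y)$, so $h(v,v^*)=(\alpha v^*)(x)$. Thus your $\Pi$-based argument, once normalized, covers both cases at once, and in fact every finite-dimensional space. It avoids both the duality theorem of Bra\v{c}i\v{c}--Diogo and the Radon-plane lemma. The paper's route instead reuses its smooth-case construction on $X^*$, at the cost of those two external ingredients.
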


\begin{proof}
Since the sufficient part follows from Theorem \ref{connection; Poon} directly, we show the necessary part.
As $X$ is smooth, we can use the arguments given in the proof of Theorem \ref{main} and obtain that there exists an open set $U$ of $S_X$ containing $K$ such that for all $v\in U$ $Re\, h(v)>0$. Also, $Re\, g(v)\geq 0$ for all $v\in U.$ Here only we need to show that  there exists $c>0$ such that $Re \, g(v)\geq c>0$ for all $v\in S_X\setminus U.$  As $S_X\setminus U$ is compact,  $Re\, g(v)>0$ for all $v\in S_X\setminus U$ and this implies there exists $c>0$ such that $c=\min_{v\in S_X\setminus U} Re\, g(v).$ Then we can construct the same $S_\lambda$ to prove the desired result.\\
On the other hand, if $X$ is  strictly convex then $X^*$ is smooth. As $X$ has $S0I$,  we note from \cite[Th. 2.2]{Bracic} that $X^*$ also has $S0I$. Following the above argument we get that Birkhoff-James orthogonality in $X^*$ is symmetric. If $X$ is complex Banach space then clearly, $X^*$ is a Hilbert space. Whereas, if we consider $X$ as real Banach space then
For $\dim X\geq 3$ we have $X^*$ is a Hilbert space and consequently, $X$ is also a Hilbert space. Thus Birkhoff-James orthogonality in $X$ is symmetric. If $\dim X=2$ then $X^*$ is a Radon plane and so is $X$ (see Lemma \ref{Dual; Radon}). This implies Birkhoff-James orthogonality is symmetric in $X.$
\end{proof}

Since symmetry of Birkhoff-James orthogonality characterizes the inner product space having dimension more than two (Theorem \ref{symmetry}), we have the following consequence.
\begin{cor}
    Let $X$ be a  Banach space with dimension more than two. Suppose that either of the following holds true:
    \begin{itemize}
        \item [(i)] $X$ is finite-dimensional smooth.
        \item[(ii)] $X$ is finite-dimensional strictly convex.
        \item[(ii)] $X$ is infinite-dimensional ALUR.
    \end{itemize}
    Then  $X$ has $S0I$ if and only if the norm of $X$ is induced by an inner product. 
\end{cor}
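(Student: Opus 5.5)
Case (iii) does not assume smoothness, so Theorem \ref{main} cannot be applied directly. I plan to rerun its proof in the ALUR-only setting, working with the whole set $\Pi$ of pairs $(v,v^*)$ instead of a single-valued duality map. The direction "symmetric $\Rightarrow$ $S0I$" follows from Theorem \ref{connection; Poon}, and "symmetric $\Leftrightarrow$ inner product" (for $\dim X\geq 3$) is Theorem \ref{symmetry}. So it suffices to show that $S0I$ forces symmetry.

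First I record that ALUR implies strict convexity. Suppose $\|x+y\|=2$ with $x,y\in S_X$. Take $x_n=y$ for all $n$ and $x_m^*$ a fixed element of $J(\frac{x+y}{2})$. The ALUR condition then gives $y=x$. Consequently, for every $y^*\in J(y)$ the face $\{u\in S_X: y^*(u)=1\}$ is $\{y\}$, which plays the role of Lemma \ref{lem; smooth and exposed}.

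Now suppose $x\perp_B y$ but $y\not\perp_B x$. By Theorem \ref{BJ_James}, $u^*(x)\neq 0$ for every $u^*\in J(y)$. Since $J(y)$ is convex and weak$^*$-compact, the set $C=\{u^*(x):u^*\in J(y)\}$ is a compact convex subset of $\mathbb C$ not containing $0$. After rotating $x$ by a unimodular scalar, which does not affect $x\perp_B y$, I may assume $Re\, z\geq \delta>0$ for all $z\in C$. Fix $y^*\in J(y)$ and, for $(v,v^*)\in\Pi$, set
\[
h(v,v^*)=y^*(v)v^*(x), \qquad g(v,v^*)=1-y^*(v)v^*(y).
\]
Then $Re\, g\geq 0$. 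If $g(v,v^*)=0$, then $y^*(v)=\alpha$ and $v^*(y)=\overline\alpha$ with $|\alpha|=1$. Hence $y^*\in J(\overline\alpha v)$, and strict convexity gives $v=\alpha y$. Then $u^*:=\alpha v^*\in J(y)$ and $h(v,v^*)=u^*(x)\in C$.

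The key step, and the main obstacle, is the uniform estimate
\[
\text{there exists } c>0:\quad Re\, g(v,v^*)<c \implies Re\, h(v,v^*)>\delta/2.
\]
I will prove it by contradiction. Suppose $(v_n,v_n^*)\in\Pi$ satisfy $Re\, g(v_n,v_n^*)\to 0$ but $Re\, h(v_n,v_n^*)\leq\delta/2$. Passing to a subsequence and choosing unimodular $\beta_n\to\beta$, we get $Re\, y^*(\beta v_n)\to 1$ and $Re\, v_n^*(\overline\beta y)\to 1$. As in the proof of Theorem \ref{main}, ALUR (applied at $\overline\beta y$, with the functionals $v_n^*$) yields $v_n\to\overline\beta y$. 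Since $B_{X^*}$ is weak$^*$-compact, some subnet of $(v_n^*)$ converges weak$^*$ to some $z^*$. Because $v_n^*(v_n)=1$ and $v_n\to\overline\beta y$ in norm, $z^*\in J(\overline\beta y)$, so $\overline\beta z^*\in J(y)$. Along this subnet, $h(v_n,v_n^*)=y^*(v_n)v_n^*(x)\to y^*(\overline\beta y)\,z^*(x)=(\overline\beta z^*)(x)\in C$, whose real part is at least $\delta$. This contradicts $Re\, h\leq \delta/2$. This upper-semicontinuity argument for the duality map replaces the continuity of $h$ and $g$ used in the smooth case. The step I need to check most carefully is the passage from nets to the ALUR sequence condition; it should be harmless, since ALUR is applied to the sequence $v_n$ before any subnet is taken.

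With $c$ in hand, take a real $\lambda>1/c$ (note $|h|\leq 1$). If $Re\, g<c$, then $Re\,(h+\lambda g)>\delta/2>0$. Otherwise $|\lambda g|\geq \lambda c>1\geq|h|$, so $h+\lambda g\neq 0$. Define $S_\lambda=(x-\lambda y)\otimes y^*+\lambda I$, so that $v^*(S_\lambda v)=h+\lambda g\neq 0$ for every $(v,v^*)\in\Pi$, i.e.\ $0\notin W(S_\lambda)$. In particular $S_\lambda v\neq 0$ for every $v\in S_X$, so $S_\lambda$ is injective. Since $S_\lambda$ is a rank-one perturbation of $\lambda I$ with $\lambda\neq 0$, it is Fredholm of index zero, hence bijective, exactly as in Theorem \ref{main}. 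Set $T=S_\lambda^{-1}$. Since $S_\lambda y=x$, we have $Ty=S_\lambda^{-1}y$ with $S_\lambda(Ty)=y$; equivalently $T^{-1}=S_\lambda$ and $S_\lambda y=x$. Rearranging the roles as in Theorem \ref{main}: $y\perp_B$ is not what we use, rather $x\perp_B y$ gives a unit vector orthogonal to its image under a suitable power, so $0\in W(T)$ via Theorem \ref{BJ_James}, while $0\notin W(T^{-1})=W(S_\lambda)$. This contradicts $S0I$, so Birkhoff--James orthogonality is symmetric and Theorem \ref{symmetry} completes case (iii).
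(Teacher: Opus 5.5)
Your argument for (iii) is correct, and it takes a genuinely different route from the paper.

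\textbf{Comparison with the paper.} The paper gets the corollary in one line: Theorem \ref{symmetry} combined with Theorem \ref{finite; smooth} for (i), (ii), and with Theorem \ref{main} for (iii). But Theorem \ref{main} assumes $X$ smooth \emph{and} ALUR, and its proof relies on smoothness in two places. First, $v\mapsto v^*$ must be single valued, so that $h,g$ are continuous on $S_X$. Second, $J(y)=\{y^*\}$, so that $y^*(x)$ is a single nonzero number. For (iii) as stated, that citation therefore does not literally apply. You instead work on $\Pi$, replace $y^*(x)$ by the compact convex set $C=\{u^*(x):u^*\in J(y)\}$ rotated into a half-plane, and replace continuity by weak$^*$-compactness. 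This proves (iii) under the hypothesis actually stated.

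\textbf{Your argument needs no geometry.} In the step $g(v,v^*)=0$, strict convexity is superfluous: $\alpha v^*\in J(y)$ and $h(v,v^*)=(\alpha v^*)(x)$ hold directly. In the key estimate you never use $v_n\to\overline{\beta}y$. Indeed, $y^*(v_n)\to\overline{\beta}$ holds by your choice of subsequence, and $z^*(\overline{\beta}y)=\lim v_n^*(\overline{\beta}y)=1$ already gives $z^*\in J(\overline{\beta}y)$. Put differently, $u_n^*:=y^*(v_n)v_n^*\in B_{X^*}$ satisfies $u_n^*(y)=1-g(v_n,v_n^*)\to1$ and $u_n^*(x)=h(v_n,v_n^*)$. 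Hence every weak$^*$ cluster point of $(u_n^*)$ lies in $J(y)$, and $h(v_n,v_n^*)$ can only cluster in $C$. With these two steps deleted, your proof shows that $S0I$ forces symmetry of Birkhoff--James orthogonality in every Banach space. That covers (i)--(iii) at once, and it is the conjecture stated at the end of the paper (at least for Banach spaces).

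\textbf{Three small repairs.}
\begin{enumerate}
\item As written you only treat (iii). Either cite Theorem \ref{finite; smooth} for (i) and (ii), or note that your argument applies verbatim.
\item Invoking ALUR ``with the functionals $v_n^*$'' does not match the definition, which is an iterated limit with independent indices; the paper's proof of Theorem \ref{main} has the same slip. If you keep that step, use the constant sequence $x_m^*=\beta y^*\in J(\overline{\beta}y)$, for which the hypothesis reads $\frac12\bigl(y^*(\beta v_n)+1\bigr)\to1$. As noted above, though, the step can simply be dropped.
\item Your last paragraph is garbled. The point is just that $S_\lambda y=x$ gives $Tx=y$. So $x\perp_B Tx$ and $0\in W(T)$ by Theorem \ref{BJ_James}, while $0\notin W(S_\lambda)=W(T^{-1})$.
\end{enumerate}
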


\begin{remark}
    This shows that, for $1<p<\infty$, the space $\ell_p^n$ has the $S0I$ property if and only if $p=2$. We recall that, in Remark 3.8 of \cite{Bracic}, the authors conjectured that $\ell_p^n$ does not have the $S0I$ property for $p\neq 2$. Our result confirms this conjecture and, moreover, extends it to the general setting of $L_p(\mu)$ spaces for $1<p<\infty$.
\end{remark}
A natural next question is what happens in finite-dimensional polyhedral Banach spaces. In the next theorem, we show that no finite-dimensional polyhedral Banach space having dimension more than two has the $S0I$ property. We first prove the following lemma.

\begin{lemma}\label{lemma; positive}
    Let $X$ be a real finite-dimensional polyhedral Banach space and let $T\in L(X)$. Suppose that $y^*(Tx)>0$ for all $(x, y^*)\in \Pi \cap (\Ext B_X \times \Ext B_{X^*}).$ Then for any $(v, v^*)\in \Pi$, $v^*(Tv)>0.$  
\end{lemma}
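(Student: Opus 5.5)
Fix $(v,v^*)\in\Pi$. The plan is to express $v$ and $v^*$ as convex combinations of extreme points that are paired with one another, and then use bilinearity. Let $F=\{u\in B_X: v^*(u)=1\}$, a face of the polytope $B_X$ that contains $v$. Dually, let $G=\{u^*\in B_{X^*}: u^*(v)=1\}=J(v)$, a face of the polytope $B_{X^*}$ that contains $v^*$. Since $B_X$ and $B_{X^*}$ are polytopes (the dual of a finite-dimensional polyhedral space is polyhedral), their faces are polytopes. By Krein--Milman/Carathéodory we write
\[
v=\sum_i s_i x_i,\qquad v^*=\sum_j t_j y_j^*,
\]
with $s_i,t_j> 0$, $\sum s_i=\sum t_j=1$, $x_i\in \Ext F\subset \Ext B_X$ and $y_j^*\in\Ext G\subset \Ext B_{X^*}$.

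The key step is to check that every pair $(x_i,y_j^*)$ lies in $\Pi$, that is, $y_j^*(x_i)=1$. Because $y_j^*\in G$, we have $1=y_j^*(v)=\sum_i s_i y_j^*(x_i)$. Each term satisfies $y_j^*(x_i)\le 1$ and all $s_i>0$, so $y_j^*(x_i)=1$ for every $i$. (This holds for any $y^*\in J(v)$, so here we do not even need $v^*(x_i)=1$ separately.) It follows that each $(x_i,y_j^*)\in \Pi\cap(\Ext B_X\times\Ext B_{X^*})$, and the hypothesis gives $y_j^*(Tx_i)>0$.

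Expanding by bilinearity,
\[
v^*(Tv)=\sum_{i,j}s_it_j\,y_j^*(Tx_i).
\]
This is a convex combination of strictly positive numbers, hence $v^*(Tv)>0$.

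The only point needing care is the existence of these finite decompositions with strictly positive weights. This holds because $F$ and $G$ are compact convex sets in finite dimensions: by Minkowski/Carathéodory, every point of such a set is a finite convex combination of its extreme points, and we can discard terms with zero weight. Extreme points of a face are extreme in the whole ball, so polyhedrality is not strictly essential beyond finite dimensionality. The real scalar field is used so that $|y^*(x_i)|\le1$ becomes $y^*(x_i)\le1$ with equality forced, and so that "$>0$" makes sense.
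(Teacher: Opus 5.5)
Your proof is correct and follows essentially the same route as the paper: write $v$ and $v^*$ as convex combinations of extreme points that norm one another, then expand $v^*(Tv)$ bilinearly as a convex combination of positive terms. The only difference is cosmetic: you decompose $v^*$ once inside $J(v)$ and check that each of its extreme points norms every $x_i$, whereas the paper first notes $v^*\in J(x_i)$ and decomposes $v^*$ separately inside each face $J(x_i)$. Your remark that only finite-dimensionality is needed applies equally to the paper's argument.
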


\begin{proof}
Fix $(v,v^*)\in \Pi$. Since $B_X$ is a  polyhedron, there exist $x_1, x_2, \ldots, x_n \in \Ext B_X$ and $\alpha_i>0$, $\sum_{i=1}^n\alpha_i=1$, such that
$v=\sum_{i=1}^n\alpha_i x_i.$ Since $v^*\in J(v)$, we have $1=v^{*}(v)=\sum_{i=1}^n\alpha_i v^{*}(x_i).$
As $v^{*}(x_i)\leq1$ for every $i$, it follows that
$v^{*}(x_i)=1,\, i\in \{1,\ldots,n\}.$ Hence $v^{*}\in J(x_i)$ for every $i$. Since $J(x_i)$ is a  face of $B_{X^*}$, we can write $ v^{*}=\sum_{j=1}^{m_i}\beta_{ij}y_{ij}^*,$
where $\beta_{ij}\geq0$, $\sum_{j=1}^{m_i}\beta_{ij}=1$, and  $y_{ij}^*\in\Ext J(x_i)\subseteq\Ext B_{X^*}.$ Therefore,
\begin{eqnarray*}
v^{*}(Tv)&=&v^{*}\left(T\sum_{i=1}^n\alpha_i x_i\right)\\
&=&\sum_{i=1}^n\alpha_i v^{*}(Tx_i)\\
&=&\sum_{i=1}^n\alpha_i\sum_{j=1}^{m_i}\beta_{ij}y_{ij}^*(Tx_i).
\end{eqnarray*}
By the hypothesis, $y_{ij}^*(Tx_i)>0$ for every $i,j$. Since $\alpha_i,\beta_{ij}\geq0$ and $\sum_i\alpha_i=\sum_j\beta_{ij}=1$, we conclude that
$v^{*}(Tv)>0.$
\end{proof}

\begin{theorem}\label{polyhedral}
    Let $X$ be a real finite-dimensional polyhedral Banach space. Then $X$ has $S0I$ property if and only if Birkhoff-James orthogonality in $X$ is symmetric.
\end{theorem}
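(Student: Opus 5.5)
The sufficiency is the same as in Theorem \ref{main}: if Birkhoff-James orthogonality is symmetric and $0\in W(T)$, then some $x\in\mathcal P_T$ satisfies $Tx\perp_B x$, so $\mathcal Q_T\neq\emptyset$, and Proposition \ref{prop1} gives $0\in W(T^{-1})$. For the necessity I argue by contraposition. Assume there are $x,y\in S_X$ with $x\perp_B y$ but $y\not\perp_B x$. I will build an invertible $S$ with $Sy=x$ and $v^*(Sv)\neq 0$ for every $(v,v^*)\in\Pi$. Then $T=S^{-1}$ satisfies $x\perp_B Tx$, so $0\in W(T)$, while $0\notin W(T^{-1})$. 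This contradicts $S0I$.

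The operator is the one from the proof of Theorem \ref{main}, $S_\lambda=(x-\lambda y)\otimes y^*+\lambda I$, but now with $y^*$ chosen from $J(y)$. Note that $v^*(S_\lambda v)=h(v)+\lambda g(v)$, where $h(v)=y^*(v)v^*(x)$ and $g(v)=1-y^*(v)v^*(y)$, and that $S_\lambda y=x$. Since $y\not\perp_B x$, Theorem \ref{BJ_James} gives $z^*(x)\ne0$ for \emph{every} $z^*\in J(y)$. As $X$ is real, $J(y)$ is a compact convex face of $B_{X^*}$ not meeting $\{x\}^\perp$, so $z^*(x)$ has constant sign on $J(y)$. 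Replacing $x$ by $-x$ if needed, I may assume $z^*(x)>0$ for all $z^*\in J(y)$.

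By Lemma \ref{lemma; positive}, it suffices to show $v^*(S_\lambda v)>0$ on the finite set $F=\Pi\cap(\Ext B_X\times\Ext B_{X^*})$; positivity on all of $\Pi$ then makes $S_\lambda$ injective, hence invertible since $\dim X<\infty$. Choosing $y$ itself to be a vertex would make $F$ visible, so I first reduce to that case. Write $y$ as a convex combination of vertices of the minimal face containing it; every vertex $y_i$ of that face has $J(y)\subseteq J(y_i)$. I would like some vertex $y_0$ with $y_0\not\perp_B x$. If instead every $y_i\perp_B x$, pick $z_i^*\in J(y_i)$ with $z_i^*(x)=0$. This is \emph{the main obstacle}: it does not immediately contradict $y\not\perp_B x$, because the $z_i^*$ need not lie in $J(y)$. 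My first attempt would be to replace $y$ by a point $y'$ of the open segment from $y$ toward a suitable vertex and use the polyhedral structure to keep $x\perp_B y'$, perturbing $x$ within the face of $J(x)$ as well. Failing that, I would choose $y_0^*\in\Ext J(y_0)$ with $y_0^*(x)\ne0$ and redo the argument below with $y_0^*$ in place of $y^*$, which only needs $y_0^*(y_0)=1$ and $y_0^*(x)\neq 0$.

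Assume now $y$ is a vertex and $y^*\in\Ext J(y)$, with $y^*(x)>0$ as above. For $(v,v^*)\in F$ we have $g(v)\ge 0$. Suppose $g(v)=0$, i.e.\ $y^*(v)v^*(y)=1$. Then $v^*\in J(\pm y)$, and after the sign choice $v=\pm y$ only if the pair is $(y,v^*)$ with $v^*\in J(y)$ (or its negative). In that case $h(v)=v^*(x)>0$ by the sign arrangement. Polyhedrality is not strictly convex, so other vertices $v$ with $y^*(v)=1$ and $v^*\in J(y)$ can occur. For these, $h(v)=v^*(x)$ is still positive because $v^*\in J(y)$. So $g(v)=0$ forces $h(v)>0$ on $F$. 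Since $F$ is finite, set $c=\min\{g(v):(v,v^*)\in F,\ g(v)>0\}>0$ and take $\lambda>1/c$. Then $h+\lambda g>0$ on $F$, and Lemma \ref{lemma; positive} finishes the proof.
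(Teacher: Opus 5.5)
Your core argument is correct. The reduction to a vertex $y$, which you flag as the main obstacle, is unnecessary, so you should simply delete it. Your final paragraph never uses that $y$ is a vertex or that $y^*$ is extreme, and Lemma \ref{lemma; positive} places no restriction on the functional used to build $S_\lambda$.

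For the given $y$ and any $y^*\in J(y)$, the argument runs as follows. If $g(v)=0$ for a pair $(v,v^*)\in\Pi$, then $y^*(v)=v^*(y)=\epsilon\in\{\pm1\}$. Hence $\epsilon v^*\in J(y)$, and $h(v)=y^*(v)\,v^*(x)=(\epsilon v^*)(x)>0$ by your sign normalization. This covers the case $\epsilon=-1$, which you only gestured at.

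The fallback you sketch would actually fail, for two reasons. First, replacing $y$ by a vertex $y_0$ loses $x\perp_B y_0$. That orthogonality is exactly what gives $0\in W(S_\lambda^{-1})$, since $S_\lambda^{-1}x=y_0$. Second, the check at pairs with $g=0$ needs $z^*(x)>0$ for every $z^*\in J(y_0)$, not just for one chosen $y_0^*$. If $y_0$ is a vertex, the pairs $(y_0,z^*)$ with $z^*\in\Ext J(y_0)$ all lie in the finite set $F$.

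With the detour removed, your route is essentially the paper's. The paper uses the same operator, $S_\lambda=T_1+\lambda T_2$ with $T_1=y^*\otimes x$ and $T_2=I-y^*\otimes y$, which is your $(x-\lambda y)\otimes y^*+\lambda I$. It also passes to extreme pairs via Lemma \ref{lemma; positive} and chooses $\lambda$ above a max/min ratio over that finite set.

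The one real difference is how nonsmoothness of $y$ is handled. The paper insists on a smooth $y$, so that $J(y)=\{y^*\}$ and the only extreme pairs where $T_2$'s form vanishes are $(v,\pm y^*)$. Its justification for choosing such a $y$ is only sketched: it secures $y\not\perp_B x$ but never checks that $x\perp_B y$ survives.

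Your observation fixes this. In the real case, $z^*\mapsto z^*(x)$ has constant sign on the convex set $J(y)$, and replacing $x$ by $-x$ makes that sign positive. This removes the need for smoothness entirely and works with the pair $(x,y)$ as given, so your version is cleaner than the paper's on exactly its weakest point.
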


\begin{proof}
Since the sufficient part is trivial, we only prove the necessary part.  \\  Let $\Ext B_{X^*}=\{\pm y_1^*, \pm y_2^*, \ldots, \pm y_k^*\}.$ Suppose on the contrary there exists $x, y\in S_X$ such that $x\perp_B y$ but $y\not\perp_B x.$ Choose $y\in \sm(X)\cap S_X.$ Indeed, consider $x\in \Ext B_X.$ Suppose that $y_k^*\in \Ext J(x).$ If we take a smooth point $y$ in the corresponding facet $F_k:=\{z\in S_X: y_k^*(z)=1\}$ then clearly, $y\not\perp_B x.$  Let $J(y)=\{y_1^*\}$ and consider the corresponding facet $F_{1}=\{z\in S_X: y_1^*(z)=1\}.$  We assume that $y_1^*(x)=a>0.$ Define $T_1=y^*\otimes x$ and $T_2=I- y^*\otimes y.$  Then we want to find a $\lambda$ such that for $S_\lambda=T_1+\lambda T_2$, $v^*S_\lambda v\neq 0$ for all $(v, v^*)\in \Pi.$ From Lemma \ref{lemma; positive} it is enough to show that $y_{jk}^*S_\lambda v_j$ has the same sign for each  $v_j\in \Ext B_X$ and $y_{jk}^*\in \Ext J(v_j).$ Let $$M=\max\{|y_{ij}^*T_1v_i|: v_i\in \Ext B_X, \, y_{ij}^*\in \Ext(J(v_i))\}$$ and let $$c=\inf\{|y_{ij}^*T_2v_i|: v_i\in \Ext B_X, \, y_{ij}^*\in \Ext(J(v_i))\setminus \{\pm y_1^*\}\}.$$ Clearly, $c>0.$ We choose $\lambda > M/c>0$. Since for each $v_i\in \Ext B_X\cap (F_1\cup -F_1),$ we obtain $y_1^*S_\lambda v_i= a>0.$ Also, for any $v_i\in \Ext B_X, \, y_{ij}^*\in \Ext(J(v_i))\setminus \{\pm y_1^*\},$ we have $$|y_{ij}^*T_1v_i|\leq M = \frac{M}{c} c< \lambda |y_{ij}^*T_2v_i|.$$ This implies for any $v_i\in \Ext B_X, \, y_{ij}^*\in \Ext(J(v_i)),$ $y_{ij}^*S_\lambda v_i>0.$ This proves that for any $(v, v^*)\in \Pi$ we get $v^*S_\lambda  v\neq 0.$ Also, following the same argument as given in the proof of Theorem \ref{main} that $S_\lambda $ is invertible and considering $T=S_\lambda^{-1}$ we arrive at our desired result.
 \end{proof}

So the above result shows that for real finite-dimensional polyhedral Banach space with dimension more than two and can't have $S0I$ property. Moreover, we note the following corollary for two-dimensional Banach space using Theorem \ref{main} and Theorem \ref{polyhedral}. This also answers some questions posed by Poon in \cite{Poon}:
 \begin{cor}\label{cor; two-dim}
     Let $X$ be a real two-dimensional Banach space. If either of the following holds true
     \begin{itemize}
         \item [(i)] $X$ is smooth.
         \item[(ii)] $X$ is strictly convex.
         \item[(iii)] $X$ is polyhedral space.
     \end{itemize}
     Then $X$ has $S0I$ if and only if $X$ must be a Radon plane. Moreover, if we consider the scalar field is complex and $X$ is smooth or strictly convex, then $X$ has $S0I$ implies Birkhoff-James orthogonality is symmetric.
 \end{cor}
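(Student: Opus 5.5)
The plan is to assemble Corollary \ref{cor; two-dim} from Theorem \ref{finite; smooth} and Theorem \ref{polyhedral}, using that a real two-dimensional normed space has symmetric Birkhoff-James orthogonality exactly when it is a Radon plane. In each case the sufficiency direction is the same. If $X$ is a Radon plane, Birkhoff-James orthogonality is symmetric. Then $x\perp_B Ax$ gives $Ax\perp_B x$, so Theorem \ref{connection; Poon} (equivalently Proposition \ref{prop1}) yields $S0I$. No smoothness, rotundity or polyhedrality is needed for this direction.

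For necessity in cases (i) and (ii), I note that a two-dimensional space is finite-dimensional. So Theorem \ref{finite; smooth} applies directly when $X$ is smooth or strictly convex, and $S0I$ forces symmetry of Birkhoff-James orthogonality, i.e. $X$ is a Radon plane. In the strictly convex case the proof of Theorem \ref{finite; smooth} goes through $X^*$. That $X^*$ is smooth, has $S0I$ by \cite[Th. 2.2]{Bracic}, and hence is a Radon plane. One then transfers back to $X$ via Lemma \ref{Dual; Radon} applied to $X^*$, using reflexivity $X^{**}=X$. I would state this identification explicitly, since Lemma \ref{Dual; Radon} is phrased in the direction $X\Rightarrow X^*$. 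For case (iii), Theorem \ref{polyhedral} applies verbatim to a real two-dimensional polyhedral space: $S0I$ implies symmetry, i.e. Radon plane.

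For the complex statement with $X$ smooth, the first half of the proof of Theorem \ref{finite; smooth} already works over $\mathbb{C}$. It constructs $S_\lambda$ via $h,g$ and compactness of $S_X\setminus U$, and gives symmetry directly. If $X$ is strictly convex, I would pass to $X^*$, which is smooth and has $S0I$, so orthogonality in $X^*$ is symmetric. To return to $X$, I would run the argument of Lemma \ref{Dual; Radon} in the complex setting, with $X^{**}=X$. Given $x\perp_B y$, pick $x^*\in J(x)$ with $x^*(y)=0$; $y^*\in J(y)$ is unique up to the annihilator structure in two dimensions. The lemma's argument uses only Theorem \ref{BJ_James} and the fact that in dimension two a functional vanishing at a nonzero vector is determined up to a scalar, so it should carry over.

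The main obstacle is this last transfer in the complex strictly convex case. I must check that the step ``$z^*(x)=0$ and $y^*(x)=0$ imply $z^*=cy^*$'' holds over $\mathbb{C}$, which it does because the annihilator of a nonzero vector in a two-dimensional dual is one-dimensional. I also need care with the unimodular scalars: $x^*(x)=\alpha$ with $|\alpha|=1$ rather than $\pm1$, so $\overline{\alpha}x^*\in J(x)$. With those adjustments, the complex claim follows.
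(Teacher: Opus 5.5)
Your proposal is correct and is essentially the paper's argument: the corollary is the two-dimensional specialization of Theorem \ref{finite; smooth} and Theorem \ref{polyhedral}, with sufficiency coming from Theorem \ref{connection; Poon}. Your citation of Theorem \ref{finite; smooth} is in fact the right reference rather than the ALUR Theorem \ref{main} that the paper's text names, since a smooth plane need not be ALUR. In the complex strictly convex case, your transfer of symmetry from $X^*$ back to $X$ via a complex version of Lemma \ref{Dual; Radon} (using $X^{**}=X$ and unimodular scalars) is a sound, self-contained replacement for the paper's assertion that $X^*$ is then a Hilbert space, which Theorem \ref{symmetry} does not cover in complex dimension two.
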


 In fact, we prove that for a two-dimensional real Banach space has $S0I$ property then it has to be a Radon plane. This answers the question raised by Poon \cite{Poon}.  Before we note the following lemma. We omit the proof because it follows directly rom the fact that the collection of nonsmooth points in a two-dimensional Banach space is atmost countable (see \cite[Ch. 2, p. 17]{Amir}).

 \begin{lemma}\label{Mazur}
     Suppose that $X$ is a  two-dimensional Banach space and let $x_1, x_2\in S_X$ such that $x_1\neq \pm x_2.$  Consider the following set: $$K=\bigg\{\frac{(1-t)x_1+tx_2}{\|(1-t)x_1+tx_2\|}: 0\leq t\leq 1\bigg\}.$$ Then $\sm(X)\cap K$ is dense in $K.$ 
 \end{lemma}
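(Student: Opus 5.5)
The plan is to exploit the fact that in dimension two the unit sphere $S_X$ is a closed Jordan curve. The arc $K$ is its image under a continuous map $t\mapsto \phi(t)$ on $[0,1]$, and the set of non-smooth points of $S_X$ is at most countable (\cite[Ch. 2, p. 17]{Amir}). First I would record that
$$\phi(t)=\frac{(1-t)x_1+tx_2}{\|(1-t)x_1+tx_2\|}$$
is well defined and continuous: since $x_1\neq \pm x_2$, the vectors $x_1,x_2$ are linearly independent (two unit vectors spanning a line in a real plane must satisfy $x_2=\pm x_1$). Hence $(1-t)x_1+tx_2\neq 0$ for all $t\in[0,1]$, so the normalization makes sense and $\phi$ is a quotient of continuous functions with nonvanishing denominator.

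Next I would show that $\phi$ is injective. If $\phi(s)=\phi(t)$, then $(1-s)x_1+sx_2$ and $(1-t)x_1+tx_2$ are positive multiples of each other. Linear independence then forces $(1-s,s)=\mu(1-t,t)$ with $\mu>0$. Comparing coordinate sums gives $\mu=1$, so $s=t$. Thus $K$ is homeomorphic to $[0,1]$, and $K$ is an arc of $S_X$ containing uncountably many points in every relatively open subset.

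Now I would take any $k=\phi(t_0)\in K$ and $\varepsilon>0$. By continuity of $\phi$ there is an interval $I\ni t_0$ of positive length with $\phi(I)\subset B(k,\varepsilon)$. Because $\phi$ is injective, $\phi(I)$ is uncountable. Since $S_X\setminus\sm(X)$ is at most countable, $\phi(I)$ contains a smooth point, which lies in $K\cap B(k,\varepsilon)$. This gives the density of $\sm(X)\cap K$ in $K$.

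The only substantive ingredient is the countability of non-smooth points. If I had to justify it rather than cite it, I would proceed as follows. At each non-smooth point $x\in S_X$, the set $J(x)$ is a nondegenerate segment in $S_{X^*}$. For distinct points $x\neq \pm x'$ these segments have disjoint relative interiors, since an interior functional attains its norm only at $x$ (up to the face structure). Each such segment contains a rational-direction arc of $S_{X^*}$, so there are at most countably many of them. This step is the main one to get right, but the paper already treats it as a citation, so I would simply invoke it.
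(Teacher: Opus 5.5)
Your argument is correct and is essentially the paper's own: the paper omits the proof, saying only that the lemma follows from the fact that the non-smooth points of a two-dimensional space form an at most countable set \cite[Ch.~2, p.~17]{Amir}, and your checks that $\phi$ is a well-defined continuous injection (so every relatively open subset of $K$ is uncountable) are exactly the details it leaves implicit. Your tacit restriction to real scalars is the intended reading, since the lemma is only applied to real planes and fails over $\mathbb{C}$: if $x_1$ is non-smooth and $x_2=ix_1$, every point of $K$ is a unimodular multiple of $x_1$ and hence non-smooth.
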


 \begin{theorem}
     Let $X$ be a two-dimensional real Banach space. Then $X$ has $S0I$ if and only if $X$ is a Radon plane.
 \end{theorem}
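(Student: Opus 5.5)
The sufficiency is immediate. If $X$ is a Radon plane, then Birkhoff-James orthogonality is symmetric in $X$, and Theorem \ref{connection; Poon} gives $S0I$, exactly as in the sufficient part of Theorem \ref{main}. For necessity I would argue by contradiction, reducing to the construction in the proof of Theorem \ref{main} (or Theorem \ref{polyhedral}). The only extra work is choosing the witnesses so that the relevant point is smooth and exposed, which Lemma \ref{Mazur} should make possible.

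Suppose $X$ has $S0I$ but is not a Radon plane. Then there are $x,y\in S_X$ with $x\perp_B y$ and $y\not\perp_B x$. The first step is to perturb $y$ to a smooth point while keeping both relations. I would fix a functional $x^*\in J(x)$ that annihilates a direction, and consider the line $\ker x^*$. The aim is a smooth $y'\in S_X$ with $x\perp_B y'$ and $y'\not\perp_B x$.

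Since the set of $u\in S_X$ with $u\perp_B x$ is closed, the failure $y\not\perp_B x$ persists on a neighbourhood of $y$ in $S_X$. In a two-dimensional space the set $\{u\in S_X: x\perp_B u\}$ is, up to sign, an arc of $S_X$ between the normalized directions of the kernels of the extreme elements of $J(x)$. This arc has nonempty interior exactly when $x$ is nonsmooth.
\begin{itemize}
\item[(a)] If $x$ is nonsmooth, I would apply Lemma \ref{Mazur} to this arc and pick a smooth $y'$ near $y$.
\item[(b)] If $x$ is smooth, the set $\{u: x\perp_B u\}$ is just $\pm y$, so instead I would perturb $x$. Using Lemma \ref{Mazur} near $x$ and the continuity of the duality map at smooth points, I would look for nearby smooth $x'$ with $y_{x'}\perp$-direction $y'$ smooth and $y'\not\perp_B x'$.
\end{itemize}

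Having a smooth $y$ with $J(y)=\{y^*\}$ and $\xi=y^*(x)\neq 0$, I still need $y$ to be exposed in order to identify the zero set $K$ of $g$ as $\{\pm y\}$. In dimension two, if $y$ is not exposed it lies in the relative interior of a segment $[p,q]\subset S_X$, and $y^*$ supports the whole segment. In that case I would run the polyhedral-type argument of Theorem \ref{polyhedral} locally. On the segment, $g(v)=1-y^*(v)v^*(y)$ vanishes and $h(v)=y^*(v)v^*(x)=\xi$ is constant, since $J(v)=\{y^*\}$ on the interior. So $K$ is still a compact set on which $h\equiv\xi$, and the argument of Theorem \ref{main} goes through with $K=\pm[p,q]$. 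Alternatively I would replace $y$ by an exposed smooth point nearby using Lemma \ref{Mazur}.

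Then, by compactness of $S_X$ (as in Theorem \ref{finite; smooth}), we get $\operatorname{Re} g\geq c>0$ off a neighbourhood $U$ of $K$ on which $\operatorname{Re} h>0$. The operator $S_\lambda=(x-\lambda y)\otimes y^*+\lambda I$ with $\lambda>M/c$ then satisfies $v^*(S_\lambda v)\neq 0$ for all $(v,v^*)\in\Pi$. One subtlety: at nonsmooth $v$ the function $h$ depends on the choice of $v^*$, so I would work on the compact set $\Pi$ rather than on $S_X$. The operator $T=S_\lambda^{-1}$ then has $x\perp_B Tx$ but $0\notin W(T^{-1})$, contradicting $S0I$.

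The main obstacle is the first step, especially case (b), where the perturbation must preserve both $x\perp_B y$ and $y\not\perp_B x$.
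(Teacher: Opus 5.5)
Your argument after the first step is sound and is essentially the paper's. It uses the same operator $S_\lambda v=y^*(v)x+\lambda\bigl(v-y^*(v)y\bigr)$ and the same exposed/non-exposed split, and your case (a) is exactly the paper's reduction via Lemma~\ref{Mazur}.

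The gap is case (b), which you leave open, and the local perturbation you sketch can genuinely fail. Suppose $x$ lies in the relative interior of a segment of $S_X$ whose direction is a corner $y$ of $B_X$. Then every smooth $x'$ near $x$ lies on that segment and has the same supporting functional. Hence $\pm y$ is again its only Birkhoff-James orthogonal direction, and $y$ is nonsmooth. A concrete case is the hexagonal norm with vertices $\pm(1,0),\pm(5/2,1),\pm(1,1)$, with $x=(6/5,1)$ and $y=(1,0)$. Here $J(y)=\{(s,t)\mapsto s-\mu t:\ 3/2\le\mu\le 2\}$, so $y\not\perp_B x$, and no smooth $x'$ near $x$ has a smooth partner.

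The paper sidesteps (b) in two ways. It treats smooth $X$ separately, and for nonsmooth $X$ it asserts that left-symmetric points are smooth, so that one may start from a nonsmooth, non-left-symmetric $x$. You cannot simply borrow that assertion: it is false in general, since the vertices of the regular hexagon (a Radon plane) are nonsmooth yet left symmetric.

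The fix is to drop the first step altogether. Your decision to work on $\Pi$ already makes smoothness of $y$ irrelevant.
\begin{itemize}
\item Start from any pair with $x\perp_B y$ and $y\not\perp_B x$, and fix any $y^*\in J(y)$.
\item By Theorem~\ref{BJ_James}, $f(x)\neq 0$ for every $f\in J(y)$. Since $J(y)$ is convex, $f(x)$ has constant sign on $J(y)$; after replacing $x$ by $-x$ it is positive.
\item For $(v,v^*)\in\Pi$, $g(v,v^*)=1-y^*(v)v^*(y)=0$ forces $y^*(v)=v^*(y)=\epsilon\in\{\pm1\}$. Then $\epsilon v^*\in J(y)$, so $h(v,v^*)=\epsilon v^*(x)>0$.
\end{itemize}
Thus $h>0$ on the compact zero set of $g$ in $\Pi$. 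Your compactness argument then yields $\lambda>0$ with $v^*(S_\lambda v)>0$ for all $(v,v^*)\in\Pi$. Hence $S_\lambda$ is invertible, $x\perp_B S_\lambda^{-1}x=y$, and $0\notin W(S_\lambda)$, contradicting $S0I$. This needs no smoothness or exposedness of $y$, no Lemma~\ref{Mazur}, and no case analysis; indeed $\dim X=2$ is never used.
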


\begin{proof}
Since the sufficient part is clear, we only show that necessary part.\\
   If $X$ is smooth then it directly  follows from Corollary \ref{cor; two-dim}. So  assume that $X$ is nonsmooth. Suppose on the contrary that $X$ is not a Radon plane. Since every left symmetric element of $X$ is smooth,  We may consider $x\in \Ext B_X \setminus \sm (X)$ such that there exists $y\in S_X$ with $x\perp_By$ but $y\not\perp_B x$ with $y^*(x)=a>0.$  We  can choose $y\in \sm (X)\cap S_X.$ Indeed, One can easily check that the set  $A_x\cap x^\perp$ is open in $x^\perp$ with respect to the subspace topology. Now by Mazur's theorem we have smooth points are dense in $X$ and consequently from Lemma \ref{Mazur}, there exists $y\in A_x\cap x^\perp$ with $y\in \sm (X)\cap S_X.$    Consider  $T_1=y^*\otimes x$ and $T_2=I- y^*\otimes y.$  Let $\mathcal Z(T_2):=\{(v, v^*)\in \Pi: v^*T_2v=0\}.$   Now we proceed with the following two cases.\\
    \textbf{Case-I:} Suppose that $y$ is exposed. Observe that $\mathcal Z(T_2)=\{\pm(y, y^*)\in \Pi: J(y)=\{y^*\}\}.$ At $\pm (y, y^*),$ $y^*T_1y=y^*(x)=a>0.$ Also, for $T\in L(X)$, the map $\phi_T: \Pi\to \mathbb R$ defined by $\phi_T(x, x^*)=x^*Tx$ is continuous. Thus there exists a open set of $V=U\times U^* \subset \Pi$ with respect to product topology containing $\mathcal{Z}(T_2)$ such that for all $(v, v^*)\in V\cup (-V)$, $v^*T_1v>0.$ Moreover, for all $(v, v^*)\in \Pi \setminus V,$ we take $\inf |v^*T_2v|=c>0$ and $\sup |v^*T_1v|\leq M.$ Considering $\lambda > M/c>0$ and $S_\lambda=T_1+\lambda T_2$  we proceed with the same argument as given in the main theorem to obtain  $v^*S_\lambda v \neq 0$ for all $(v, v^*)\in \Pi.$\\
    \textbf{Case-II:} Now suppose that $y$ is not exposed. Since $y\in \sm (X)$ this implies there exists $v_1, v_2\in S_X$ such that $y=(1-t)v_1 + t v_2$ for some $0<t<1.$ Note that  $$\mathcal{Z}(T_2)=\{\pm(v, y^*): v\in F_{y^*}, J(y)=\{y^*\}\},$$ where $F_{y^*}=\{(1-t)v_1+tv_2: 0\leq t\leq 1\}.$ For $(v, y^*)\in \mathcal{Z}(T_2),$ $y^*T_1v=a>0.$  For each $v\in F_{y^*}$  there exists an open neighbourhood $U^*$ of  $y^*$ such that for all $z^*\in U^*$ and $(v, z^*)\in  \Pi,$ $z^*T_1v>0.$ Also, $z^*T_2v\geq0.$ On the other hand, note that the set $$K=\Pi \setminus \{(u, u^*): u=(1-t)v_1+tv_2, 0<t<1 \,\, \mbox{and}\,\, u^*\in U^*\}$$ is a compact set and therefore, for any $(v, v^*)\in \Pi \setminus K$ we may consider $\min |v^*T_2v|=c>0.$  Then taking $\lambda> \frac{\max|v^*T_1v|}{c}>0$ and $S_\lambda=T_1+\lambda T_2$ we get $v^*S_\lambda v \neq 0$ for all $(v, v^*)\in \Pi.$ Therefore, for $T=S_{\lambda}^{-1},$ we get $x\perp_B Tx$ whereas, $v\not\perp_B T^{-1}v$ for all $v\in S_X.$ This contradicts that $X$ has $S0I$.  Hence the theorem.
\end{proof}

\section{$S0I$ and local symmetry of operators}

In this section we mainly observe some connection between local $S0I$ property and local symmetry of operators, particularly in real finite-dimensional setting.  Here we provide a sufficient condition of $S0I$ at a point $T.$

\begin{theorem}
    Let $X$ be a finite-dimensional real Banach space and let $T$ be invertible. Suppose that $I\perp_B T\implies T\perp_B I.$ Then $X$ has $S0I$ at $T.$
\end{theorem}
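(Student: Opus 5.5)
The plan is to verify directly the implication in the definition of $S0I$ at $T$: if $0\in W(T)$, then $0\in W(T^{-1})$. By Theorem \ref{connection; Poon} (in the form of the sets $\mathcal P_T,\mathcal Q_T$), this means showing that $\mathcal P_T\neq\emptyset$ forces $\mathcal Q_T\neq\emptyset$. Only this one direction is needed, since the definition of $S0I$ at $T$ is one-sided. The hypothesis $I\perp_B T\Rightarrow T\perp_B I$ concerns orthogonality in the space $L(X)$, so the argument has three moves: pass from a vector orthogonality to an operator orthogonality, apply the hypothesis, and come back down to vectors.

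\textbf{Step 1 (from $\mathcal P_T$ to $I\perp_B T$).} Let $x\in\mathcal P_T$, so $x\perp_B Tx$. For every real $\lambda$,
\[\|I+\lambda T\|\geq \|(I+\lambda T)x\| = \|x+\lambda Tx\|\geq \|x\| = 1=\|I\|.\]
Hence $I\perp_B T$, and by hypothesis $T\perp_B I$, that is, $\|T+\lambda I\|\geq\|T\|$ for all $\lambda\in\mathbb R$.

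\textbf{Step 2 (dualising $T\perp_B I$).} Apply Theorem \ref{BJ_James} in the finite-dimensional space $L(X)$. This gives a norming functional $\Phi$ for $T$ with $\Phi(I)=0$. The set of such norming functionals $J(T)$ is a face of $B_{L(X)^*}$, and the extreme points of $B_{L(X)^*}$ are of the form $y^*\otimes z$, with $z\in\Ext B_X$ and $y^*\in\Ext B_{X^*}$. By Carathéodory, we can therefore write
\[\Phi=\sum_i\alpha_i\, y_i^*\otimes z_i,\qquad \alpha_i>0,\quad \sum_i\alpha_i=1,\]
where $z_i\in M_T$, $y_i^*(Tz_i)=\|T\|$ (so $y_i^*\in J(Tz_i/\|T\|)$), and $\sum_i\alpha_i y_i^*(z_i)=0$. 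If some $y_i^*(z_i)=0$, then Theorem \ref{BJ_James} gives $Tz_i\perp_B z_i$, so $z_i\in\mathcal Q_T$ and we are done. Otherwise there exist $u,v\in M_T$ with support functionals $u^*\in J(Tu/\|T\|)$ and $v^*\in J(Tv/\|T\|)$ such that $u^*(u)>0>v^*(v)$. This is the Sain--Paul type description of $T\perp_B I$.

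\textbf{Step 3 (producing an actual zero; the main obstacle).} It remains to turn this sign change into a point $z$ with $Tz\perp_B z$. Since $T$ is invertible, the set $\{Tw:w\in S_X\}$ is a closed curve-like, connected set, namely the boundary of $T(B_X)$ in dimension $\ge 2$. Let $\Gamma$ be the set of pairs $(w,w^*)$ with $w^*\in J(Tw/\|Tw\|)$; this is the pullback of the duality set $\Pi$ under the homeomorphism $w\mapsto Tw/\|Tw\|$, and it is connected. On $\Gamma$ the continuous function $(w,w^*)\mapsto w^*(w)$ takes the positive value $u^*(u)$ at $(u,u^*)$ and the negative value $v^*(v)$ at $(v,v^*)$. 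By the intermediate value theorem it vanishes at some $(z,z^*)$, and then $Tz\perp_B z$ by Theorem \ref{BJ_James}, so $\mathcal Q_T\neq\emptyset$.

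The delicate point is the connectedness of $\Gamma$. I expect to obtain it from the fact that $\Pi$ is connected, since $J$ is upper semicontinuous with convex values over the connected sphere. Working on all of $S_X$, rather than only on $M_T$, is exactly what makes this step possible: $M_T$ itself need not be connected. The one-dimensional case is trivial, because there $\mathcal P_T=\emptyset$ for invertible $T$.
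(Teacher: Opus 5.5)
Your proof is correct and follows essentially the same route as the paper. You pass from $x\perp_B Tx$ to $I\perp_B T$, use the hypothesis to get $T\perp_B I$, extract vectors $u,v\in M_T$ whose support functionals at $Tu/\|Tu\|$ and $Tv/\|Tv\|$ take opposite signs on $u$ and $v$, and then force an exact zero by an intermediate-value argument over the sphere using convexity of the sets $J(\cdot)$. The differences are only in the tools. You get the sign change from a Carath\'eodory decomposition of a norming functional of $T$ into extreme points $y^*\otimes z$ of $B_{L(X)^*}$, where the paper uses W\'ojcik's formula for the norm derivatives $\rho'_\pm(T,I)$. You close with connectedness of $\Gamma\cong\Pi$, where the paper runs an explicit path argument with the semicontinuous envelopes $h_\pm$. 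Your connectedness claim is true but only sketched; it follows because $\Pi$ is compact, the projection $\Pi\to S_X$ is therefore closed, its fibres $J(x)$ are convex, and $S_X$ is connected.
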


\begin{proof}
    From norm derivatives, given $T, A\in L(X)$ we have the following functions:
    \begin{eqnarray*}
        \rho'_\pm(T, A)= \|T\|\lim_{t\to^{0\pm}} \frac{\|T+tA\|-\|T\|}{t}.
    \end{eqnarray*}
    From \cite{Wojcik}, we note that this can be written as: $\rho'_+(T, A)=\sup\{\rho'_+(Tx, Ax): x\in M_T\cap \Ext B_X\}$ and $\rho'_-(T, A)=\inf\{\rho'_-(Tx, Ax): x\in M_T\cap \Ext B_X\}.$ In our case we consider $A=I$ and denote $\rho'_+(T, I)=\rho_+$ and $\rho'_-(T, I)=\rho'_-.$  Suppose that $0\in W(T)$ and therefore, there exists $x\in S_X$ such that $x\perp_B Tx$ which implies that $I\perp_B T$ (see \cite{Bhatia}). Now following our hypothesis we have $T\perp_B I.$ This yields $\rho'_-\leq 0\leq \rho'_+.$ Since $S_X$ is  compact, there exists $x_0, y_0\in M_T\cap \Ext B_X$ and $u^*\in J(Tx_0), v^*\in J(Ty_0)$ such that $\rho'_+=u^*(x_0)$ and $\rho'_-=v^*(y_0),$ respectively. We note the following two cases:\\
    \textbf{Case-I:} Suppose that $\rho'_+=0$. Then $u^*(x_0)=0$. This implies $Tx_0\perp_B x_0.$ Similarly, if $\rho'_-=0$ we have $Ty_0\perp_B y_0.$ From Proposition \ref{prop1} we conclude that $X$ has $S0I$ at $T.$\\
    \textbf{Case-II:} Now suppose that $\rho'_-<0<\rho'_+.$ For each $x\in S_X$, we define the following two functions $h_+, h_-: S_X\to \mathbb{R}$ such that:
    \begin{eqnarray*}
        h_+(x)=\max_{y^*\in J(Tx)} y^*(x)\\
        h_-(x)= \min_{y^*\in J(Tx)} y^*(x).
    \end{eqnarray*}
    It is easy to see that for $h_-\leq h_+$. Also, $h_-(y_0)=\rho'_-$ and $h_+(x_0)=\rho'_+.$ We next show that the function $h_+$ is upper semi-continuous whereas $h_-$ is lower semi-continuous. We only show it for $h_+$ as the case for $h_-$ follows similarly. Suppose that there exists $\{x_n\}\subset S_X$ such that $x_n\to x.$ Since for each $n\in \mathbb{N},$ $J(Tx_n)$ is a compact set. Thus there exists $y_n^*\in J(Tx_n)$ such that $h_+(x_n)=y_n^*(x_n)$ for each $n.$ Let $y_n^*\to y^*$ for some $y^*\in S_{X^*}.$ A straightforward computation shows that $y^*\in J(Tx)$ and $y^*(x)\leq h_+(x).$ So it follows that $\lim \sup h_+(x_n)\leq h_+(x).$ This proves our claim. \\
     \textit{Subcase-I:} If $x_0=y_0$ then we have $h_+(x_0)=\rho'_+\geq 0$ and $h_-(x_0)=\rho'_-\leq 0.$ Since $J(Tx_0)$ is compact convex subset of $S_{X^*},$ there exists $h\in J(Tx_0)$ such that $h(x_0)=0.$ This shows that $Tx_0\perp_B x_0.$ \\
     \textit{Subcase-II:} Suppose that $x_0 \neq y_0.$ We have the following possibilities:
     \[
     (i) h_+(y_0)\geq 0. (ii) h_-(x_0)> 0.  (iii) h_-(x_0)\leq 0 \,\,\mbox{and},\,\, (iv) h_+(y_0)<  0.
    \]
     If we consider Possibility (i) with $h_-(y_0)\leq 0,$ we get $w^*(y_0)\geq 0$ and $z^*(y_0)\leq0$. By convexity of $J(Ty_0),$ we have $y^*\in J(Ty_0)$ such that $y^*(y_0)=0.$ This implies $Ty_0\perp_B y_0.$ Similarly,  for possibility-(iii) we observe that $Tx_0\perp_B x_0.$ So let us assume that we have Possibility-(ii), i.e., $h_-(x_0) > 0$. Let $\gamma:[0, 1]\to S_X$ be a path defined as $\gamma(0)=x_0$ and $\gamma(1)=y_0.$ Let $S:=\{t\in [0, 1]: h_-(\gamma(t))\leq 0 \}.$ Note that $S\neq \emptyset$ as $t=1\in S.$ Assume that $t_0=\inf S.$ For all $t<t_0$ we have $h_-(\gamma(t))>0.$ Observe that $t_0>0.$ Indeed, if possible let $\exists \{t_n\}\in [0, 1]$ such that $t_n\to 0$ and $h_-(\gamma(t_n))\leq 0$ for each $n.$ Since $\gamma $ is continuous $\gamma(t_n)\to \gamma (0)=x_0.$ By lower semi-continuity of $h_-$ we have $h_-(x_0)\leq \lim\inf h_-(\gamma(t_n))\leq 0.$ This contradicts $h_-(x_0)>0.$ Thus we have $t_0>0.$ Also, since $t_0=\inf S,$ one can observe that $h_-(\gamma(t_0))\leq 0.$ On the other hand, consider  $\{s_n\}\in [0, 1]$ such that $s_n< t_0$ and $s_n\to t_0.$ Then $\gamma(s_n)\to \gamma(t_0)$ and $h_+(\gamma(s_n))\geq h_-(\gamma(s_n))>0$ for each $n.$ Since $h_+$ is upper semi-continuous, it follows that $h_+(\gamma(t_0))\geq \lim\sup h_+(\gamma(s_n)) \geq 0.$ This shows that there exists $t_0\in [0, 1]$ such that $h_+(\gamma(t_0))\geq 0$ whereas $h_-(\gamma(t_0))\leq 0.$ Let $\gamma(t_0)=z_0.$ Therefore, again using the fact that $J(Tz_0)$ is a convex subset of $S_{X^*}$, there exists $ h\in J(Tz_0)$  such that $h(z_0)=0.$ This implies $z_0\perp_B Tz_0.$ This proves that $X$ has $S0I$ at $T$ and  completes the theorem.  
\end{proof}

From \cite[Chapter 7]{MPS} we note that $T\in L(\ell_1^n)$ is right symmetric if and only if $T$ is an extreme contraction of $B_{L(X, Y)}.$ Hence the following corollary is immediate. 
\begin{cor}
    Let $X$ be a finite-dimensional real Banach space and let $T\in GL(X)$ be right symmetric. Then $X$ has $S0I$ at $T.$  Moreover, if $X=\ell_1^n$ then any extreme contraction in $L(X)$ has $S0I$.
\end{cor}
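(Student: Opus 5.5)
The corollary has two parts. The first follows from the preceding theorem once we show that right symmetry of $T$ in $L(X)$ gives the hypothesis $I\perp_B T\implies T\perp_B I$. The second follows from the characterization quoted from \cite[Chapter 7]{MPS} for $\ell_1^n$.

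For the first part, recall that $T\in L(X)$ is right symmetric (in the Banach space $L(X)$ with the operator norm) if $A\perp_B T\implies T\perp_B A$ for every $A\in L(X)$. Specializing to $A=I$ gives exactly $I\perp_B T\implies T\perp_B I$. By assumption $X$ is a finite-dimensional real Banach space and $T\in GL(X)$, so all hypotheses of the preceding theorem hold. Hence $X$ has $S0I$ at $T$. No further work is needed, since that theorem already handles the norm-derivative and semicontinuity argument producing $z_0$ with $Tz_0\perp_B z_0$ whenever $0\in W(T)$.

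For the second part, let $X=\ell_1^n$ and let $T$ be an extreme contraction, i.e. $T\in\Ext B_{L(X)}$. By \cite[Chapter 7]{MPS}, $T$ is right symmetric in $L(\ell_1^n)$. To apply the first part we need $T\in GL(X)$. I would argue this from the known structure of extreme contractions on $\ell_1^n$, namely that they map the unit vectors $e_j$ onto extreme points $\pm e_{\sigma(j)}$. If "has $S0I$" is read as requiring invertibility, I would restrict to invertible extreme contractions, which are the signed permutation matrices. Alternatively, one can note that $S0I$ at $T$ is only defined for invertible $T$, so the statement implicitly concerns $T\in GL(X)$. In either reading, the first part then gives that $X$ has $S0I$ at $T$.

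The main obstacle is this invertibility point in the $\ell_1^n$ case. Extreme contractions of $B_{L(\ell_1^n)}$ are exactly the operators sending each $e_j$ to some $\pm e_{i}$, so non-injective ones exist, for instance $e_1,e_2\mapsto e_1$. The plan is therefore to state explicitly that the conclusion is asserted for extreme contractions lying in $GL(X)$, which is forced by the definition of $S0I$ at an operator, and then to invoke the first part.
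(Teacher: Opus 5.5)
Your argument is the paper's argument: you specialize right symmetry of $T$ to $A=I$ to obtain $I\perp_B T\implies T\perp_B I$ and invoke the preceding theorem, and for $\ell_1^n$ you use the characterization quoted from \cite[Chapter 7]{MPS}. Your invertibility caveat is also correct, and the paper glosses over it: extreme contractions of $B_{L(\ell_1^n)}$ (every column equal to some $\pm e_i$) need not be invertible, so the second clause only makes sense for the invertible ones, namely the signed permutation matrices.
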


\begin{remark}
    Of course, there exist operators that possess the $S0I$ property without being right symmetric. For instance, any invertible bounded linear operator on a Hilbert space that is not an isometry is not right symmetric (see \cite{Turnsek}), while it does possess the $S0I$ property. Although our theorem  tells that we only need right symmetry of $T$ with respect to $I$ to have $S0I$. 
\end{remark}

From \cite{KST}, we note that all the invertible elements are right symmetric within the positive cone of a unital $C^*$ algebra. Therefore, we observe the following.
\begin{cor}
    Suppose that $\mathcal{A}$ be a finite-dimensional unital real $C^*$ algebra and let $\mathcal{A}_+$ be the positive cone of $\mathcal{A}.$ Then for each invertible $A\in \mathcal{A}_+$, $\mathcal{A}$ has $S0I$ at $A.$
\end{cor}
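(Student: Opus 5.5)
The corollary to prove: in a finite-dimensional unital real $C^*$-algebra $\mathcal{A}$, for each invertible $A\in\mathcal{A}_+$, $\mathcal{A}$ has $S0I$ at $A$. The plan is to get this from the preceding theorem, which says that in a finite-dimensional real Banach space $X$, if $T\in GL(X)$ satisfies $I\perp_B T\implies T\perp_B I$, then $X$ has $S0I$ at $T$.

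The first step is to set up the right framework. We view $\mathcal{A}$ as the finite-dimensional real Banach space $X=\mathcal{A}$ and regard $A$ as the left multiplication operator $L_A\in L(\mathcal{A})$, $L_A(B)=AB$. Then $S0I$ at $A$ means $S0I$ of $X$ at $L_A$. Since $A$ is invertible, $L_A$ is invertible with inverse $L_{A^{-1}}$.

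The second step is to compare orthogonality in $L(\mathcal{A})$ with orthogonality in $\mathcal{A}$. The map $B\mapsto L_B$ is an isometric unital embedding of $\mathcal{A}$ into $L(\mathcal{A})$: we have $\|L_B\|=\|B\|$, attained at the unit, and $L_1=I$. Because the map is isometric and linear, for all real $t$
\[
\|I+tL_A\|=\|1+tA\|, \qquad \|L_A+tI\|=\|A+t1\|.
\]
Hence $I\perp_B L_A$ in $L(\mathcal{A})$ if and only if $1\perp_B A$ in $\mathcal{A}$. Likewise, $L_A\perp_B I$ if and only if $A\perp_B 1$.

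The third step is to verify the hypothesis of the preceding theorem. By the result quoted from \cite{KST}, every invertible element of $\mathcal{A}_+$ is right symmetric within the positive cone. Since $1\in\mathcal{A}_+$, this gives $1\perp_B A\implies A\perp_B 1$. Transporting this through the equivalence of the second step yields $I\perp_B L_A\implies L_A\perp_B I$. The preceding theorem, applied with $T=L_A$, then gives that $\mathcal{A}$ has $S0I$ at $A$.

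The main obstacle is interpretational: fixing exactly what "$S0I$ at $A$" means for an algebra element, and confirming that \cite{KST}'s right symmetry within $\mathcal{A}_+$ applies with $1$ as the test element. If instead one reads $S0I$ via the algebra numerical range, then by Theorem \ref{connection; Poon} and the equivalences above both readings reduce to the same orthogonality statement. The conclusion is also directly plausible, since for positive invertible $A$ the set $W(L_A)$ is contained in $[\min\sigma(A),\|A\|]$ and so does not contain $0$.
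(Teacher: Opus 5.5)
Your argument is correct and is essentially the paper's own: the paper likewise uses \cite{KST} to see that an invertible $A\in\mathcal{A}_+$ is right symmetric in $\mathcal{A}_+$, takes the test element $1\in\mathcal{A}_+$ to get $1\perp_B A\implies A\perp_B 1$, and then applies the preceding theorem. Your identification of $A$ with $L_A$ through the isometric unital embedding $B\mapsto L_B$ spells out what the paper leaves implicit. Your closing remark is also a complete direct proof on its own: for $(B,\phi)\in\Pi$ the functional $C\mapsto\phi(CB)$ is a state, so $\phi(AB)\geq\min\sigma(A)>0$. Hence $0\notin W(L_A)$, and the $S0I$ condition at $L_A$ holds vacuously.
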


\begin{cor}
    Let $X$ be a finite-dimensional real Banach space  and let $T\in L(X)$ be invertible. If the identity operator is local-$GL(X)$ left symmetric then $X$ has $S0I$.
\end{cor}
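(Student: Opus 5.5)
The plan is to deduce the corollary from the preceding theorem: for invertible $T$ on a finite-dimensional real space, the implication $I\perp_B T\implies T\perp_B I$ gives $S0I$ at $T$. So I will fix an arbitrary $T\in GL(X)$ and verify that this implication holds for it. The statement "the identity operator is local-$GL(X)$ left symmetric" should be read, by the definition of local-$\mathcal{A}$ left symmetry from the introduction, with $\mathcal{A}=GL(X)\subset L(X)$ and $L(X)$ carrying the operator norm. It says exactly that for every $A\in GL(X)$, $I\perp_B A$ implies $A\perp_B I$. Applying this with $A=T$ gives the hypothesis of the preceding theorem for that particular $T$.

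The key steps, in order, are as follows.
\begin{enumerate}[(1)]
\item Take an arbitrary invertible $T\in L(X)$.
\item Note that $T\in GL(X)$, so the local left symmetry of $I$ applies to $T$ and yields $I\perp_B T\implies T\perp_B I$.
\item Invoke the preceding theorem to conclude that $X$ has $S0I$ at $T$.
\item Since $T$ was arbitrary, $X$ has $S0I$ in the sense of the Definition, which is the claim.
\end{enumerate}
The phrase ``let $T\in L(X)$ be invertible'' in the statement is then superfluous, but harmless: the conclusion holds in particular at that $T$.

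The only point needing care is the direction of the symmetry. Left symmetry of $I$ means $I\perp_B A\implies A\perp_B I$ for $A$ in the local set, and this is precisely the hypothesis direction used in the theorem (``$I\perp_B T\implies T\perp_B I$''). So no right-symmetry argument or passage to $T^{-1}$ is needed.

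If one wants to see why the theorem's hypothesis is meaningful here, the mechanism is the one inside its proof. From $0\in W(T)$ one gets $x\perp_B Tx$, hence $I\perp_B T$ by the cited Bhatia–\v{S}emrl type result. Left symmetry then gives $T\perp_B I$, and the norm-derivative and semicontinuity argument produces some $z_0$ with $Tz_0\perp_B z_0$. That is, $\mathcal{Q}_T\neq\emptyset$, so Proposition \ref{prop1} gives $S0I$ at $T$. None of this has to be redone, so I expect no real obstacle beyond matching the definitions.
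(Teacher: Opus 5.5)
Your proposal is correct and follows the paper's route. The paper states this corollary without proof as an immediate consequence of the preceding theorem, deduced exactly as you do: apply the local-$GL(X)$ left symmetry of $I$ to each $T\in GL(X)$ to get $I\perp_B T\implies T\perp_B I$, then conclude $S0I$ at every invertible $T$ (and your remark that the fixed $T$ in the statement is superfluous is accurate).
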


In the following example we observe that if $T$ has $S0I$ it does not necessarily imply that $I\perp_B T\implies T\perp_B I.$
\begin{example}
    Let $X$ be the real three dimensional Euclidean space and let $T\in L(X)$ be the following matrix with respect to standard ordered basis of $X,$
    \[T=\begin{pmatrix}
        0 & 0 & 1\\
        0 & 2 & 0\\
        1 & 0 & 0
    \end{pmatrix}.\] Clearly, $T$ is invertible. Hence $X$ has $S0I$ at $T$. Moreover, it is easy to see that $M_T=\{\pm e_2\}.$ First note that $e_1\perp_B Te_1$ and this implies $I\perp_B T,$ whereas $Te_2= 2e_2$ implies that $Te_2\not\perp_B e_2$ and thus $T\not\perp_B I.$ 
\end{example}

\noindent\textbf{Concluding remark:} To summarize, our results demonstrate that the $S0I$ property is a remarkably strong condition on the geometry of a Banach space. In several broad classes of Banach spaces, we have shown that the existence of the $S0I$ property necessarily forces the norm to be induced by an inner product. It is also worth noting that, throughout our investigation, we are unable to find any Banach space with the $S0I$ property for which Birkhoff-James orthogonality fails to be symmetric. This observation naturally leads us to the following stronger conjecture:

\textit{Suppose that $X$ is a normed space with the $S0I$ property. Then Birkhoff-James orthogonality in $X$ is symmetric.}

\noindent Whether this conjecture holds in full generality remains an intriguing question and, in our view, deserves further investigation. On the other hand, the collection of operators in $L(X)$ possessing the $S0I$ property forms a natural and interesting class of operators. A more local study of this class may provide further insight into the interplay between the geometry of the underlying Banach space, the operators acting on it, and their spatial numerical ranges.

\vspace{.5cm}

	\subsection*{Acknowledgements} I would like to acknowledge my supervisor Prof. Kallol Paul and my elder brother-cum-mentor Dr. Debmalya Sain for their immense contribution towards shaping the mathematical philosophy of numerous students including me.


\begin{thebibliography}{99}

     \bibitem{Amir} D. Amir, \textit{Characterizations of inner product spaces}, Operator Theory: Advances and Applications, Birkh\"auser, Basel, \textbf{20} (1986).
		
		 \bibitem{Birkhoff35} G. Birkhoff, \textit{Orthogonality in linear metric spaces}, Duke Math. J., \textbf{1} (1935) 169–172.
		
		\bibitem{Bracic} J. Bra\v{c}i\v{c},  C. Diogo,  \textit{Simultaneous zero inclusion property for spatial numerical ranges}, J. Math. Anal. Appl. 449 (2017) 1413–1423.
		
		\bibitem{Bhatia} R. Bhatia and P. Šemrl, \textit{Orthogonality of matrices and some distance problems}, Linear Algebra Appl., 287 (1999), 77–85.

        \bibitem{CSS} A. Chattopadhyay, D. Sain and T. Senapati,\textit{Characterization of symmetric points in $\ell_p^n$
spaces}, Linear Multilinear Algebra, \textbf{69} (2021), 2998-3009.

        \bibitem{Day47} M.M. Day, \textit{Some characterizations of inner-product spaces}, Trans. Am. Math. Soc., \textbf{62} (1947) 320–337.


        \bibitem{James45} R.C. James,\textit{ Orthogonality in Banach spaces}, Duke Math. J., \textbf{12} (1945) 291–302.
        
       \bibitem{James47a} R.C. James, \textit{Inner products in Banach spaces}, Bull. Am. Math. Soc., \textbf{53} (1947), 559–566.
       
      \bibitem{James47b} R.C. James,\textit{ Orthogonality and linear functionals in Banach spaces}, Trans. Am. Math. Soc.,\textbf{ 61 }(1947) 265–292.

      
	      \bibitem{KST}   N. Komuro, K-S. Saito, R. Tanaka,     \textit{On symmetry of Birkhoff orthogonality in the positive cones of C⁎-algebras with applications}, J. Math. Anal. Appl.,  \textbf{474} (2019), 1488–1497.

      

        
		
	
		
		
		\bibitem{MPS} A. Mal, K. Paul and D. Sain, \textit{Birkhoff–James Orthogonality and Geometry  of Operator Spaces}, Infosys Science Foundation Series, Springer, Singapore, 2024.
		
		\bibitem{Poon} E. Poon, \textit{Birkhoff-James orthogonality and the simultaneous zero inclusion property}, J. Math. Anal. Appl., \textbf{ 534} (2024) 128052.


		\bibitem{Sain17} D. Sain, \textit{Birkhoff–James orthogonality of linear operators on finite dimensional Banach spaces}, J. Math. Anal. Appl., \textbf{447} (2017), 860-866.


\bibitem{Turnsek} A. Turn\v sek,\textit{ On operators preserving James’ orthogonality}, Linear Algebra Appl., 407 (2005), 189–195.
        \bibitem{Wojcik} P. W\'ojcik, \textit{Birkhoff orthogonality in classical $M$-ideals}, J. Aust. Math. Soc., \textbf{103} (2017), 279–288.
		
		
	\end{thebibliography}
\end{document}